\documentclass[11pt,reqno]{amsart}
\usepackage{geometry}                
\geometry{letterpaper}                   
\usepackage{graphicx}
\usepackage{amssymb,amsmath,amsfonts,latexsym}
\usepackage[all, cmtip]{xy}
\usepackage{mathtools}
\usepackage{tikz-cd}

\DeclareGraphicsRule{.tif}{png}{.png}{`convert #1 `dirname #1`/`basename #1 .tif`.png}

\newtheorem{theorem}{Theorem}[section]

\newtheorem{proposition}[theorem]{Proposition}
\newtheorem{definition}{Definition}[section]
\newtheorem{remark}{Remark}[section]
\newtheorem{corollary}{Corollary}[section]
\newtheorem{example}{Example}[section]

\def \C  {\mathbb C}
\def \R  {\mathbb R}

\def \I  {\widetilde{\mathfrak m}}

\begin{document}

\title{Sections of Hamiltonian Systems}
\author{Konstantinos Kourliouros}
\address{K. Kourliouros: ICMC-USP, Av. Trabalhador Sancarlense, 400-Centro, S\~ao Carlos, S\~ao Paulo, Brasil.}
\email{k.kourliouros@gmail.com}

\maketitle

\begin{abstract}
A section of a Hamiltonian system is a hypersurface in the phase space of the system, usually representing a set of one-sided constraints (e.g. a boundary, an obstacle or a set of admissible states). In this paper we give local classification results for all typical singularities of sections of regular (non-singular) Hamiltonian systems, a problem equivalent to the classification of typical singularities of Hamiltonian systems with one-sided constraints. In particular we give a complete list of exact normal forms with functional invariants, and we show how these are related/obtained by the symplectic classification of mappings with prescribed (Whitney-type) singularities, naturally defined on the reduced phase space of the Hamiltonian system.    
\medskip

\noindent \textbf{Keywords:} Hamiltonian Systems, Constraints, Singularities, Normal Forms, Functional Moduli.

\noindent \textbf{MSC2020:} 34C20, 37J06, 57R45, 70H15, 70H45.
\end{abstract}

\section{Introduction}
\label{sec1}

By a Hamiltonian system we mean a triple $(X,\omega,f)$ where $(X,\omega)$ is a symplectic manifold and $f:X\rightarrow \mathbb{R}$ is a function (the Hamiltonian). By a section of the Hamiltonian system $(X,\omega,f)$ we mean simply a smooth hypersurface $H\subset X$ (a submanifold of codimension 1). In most cases this can be interpreted, depending on the application, as a boundary or as an obstacle, lifted from the configuration space of the system $M$ to its phase space $X=T^*M$. This is the case for example in  boundary value problems of differential operators, as the problem of diffraction in geometric optics, the billiard ball problem, the problem of bypassing an obstacle e.t.c. (c.f. \cite{A1}, \cite{A11}, \cite{A2}, \cite{A3}, \cite{A4}, \cite{M}, \cite{M1}). In different applications (c.f. \cite{A0}, \cite{D}, \cite{L}, \cite{Pn}) it can be interpreted also as a set of admissible states, in the sense that the motion of the system is confined to lie strictly on the submanifold $H$. In any case, giving a section of a Hamiltonian system is equivalent to defining a Hamiltonian system with one-sided constraints, i.e. a quadruple $(X,\omega,f,H)$, where the constraints are represented by the hypersurface $H$. 

In this paper we deal with the problem of local classification of typical singularities of sections $H$ of regular (i.e. non-singular) Hamiltonian systems $(X, \omega,f)$ by diffeomorphisms preserving the system itself, i.e. by symplectomorphisms of $\omega$ which also preserve the Hamiltonian function $f$ (equivalently, the Hamiltonian vector field $Z_f$).  Definitely, the problem is equivalent to the local classification of Hamiltonian systems with one-sided constraints $(X,\omega,f,H)$ under the whole ``group" of local diffeomorphisms.

In the author's knowledge, this problem was first posed by R. B. Melrose in \cite{M}, as being of considerable importance in the theory of boundary value problems of (pseudo-) differential operators, and thus in the theory of wave propagation, in the construction of parametrices along gliding rays e.t.c. One may also consider it as a problem of classification of families of systems of solutions (Lagrangian manifolds-singularities c.f. \cite{A1}-\cite{A4}) in general variational problems with one-sided constraints, the parameter of the family being the value $f=t$ of the energy. 

In \cite{M} Melrose studied the closely related problem of local classification of pairs of hypersurfaces in symplectic space $(X,\omega)$, i.e. of pairs $(H,X_0)$, where $X_0=\{f=0\}$ is a fixed level set of the Hamiltonian $f$.  He showed that for the first occurring singularities of such pairs, which he called ``glancing hypersurfaces", there exists a simple normal form in the $C^{\infty}$-category:  
\begin{equation}
\label{nf-Mel}
\omega=dx\wedge dy+\sum_{i=1}^ndp_i\wedge dq_i, \quad X_0=\{y=0\}, \quad H=\{x^2+y+p_1=0\}
\end{equation}   
(in the analytic category and for $\dim X\geq 4$ there are functional moduli  of Ecalle-Voronin type, related to the analytic classification of pairs of involutions, c.f. \cite{Os}, \cite{V}). Later he showed in \cite{M1} that this is the only simple normal form that exists (this is in fact only true for $\dim X\geq 4$). Despite this fact, V. I. Arnol'd showed in \cite{A1} that one could proceed two more steps in the classification (of asymptotic and bi-asymptotic rays in his terminology) without encountering any moduli, as long as one replaces one of the hypersurfaces, say $X_0$, by its trace $X_0\cap H\subset H$ inside $H$.   Arnol'd's discoveries led in turn to a series of many interesting results in symplectic and Riemannian geometry, in variational calculus and optimal control, in the theory of Lagrangian and Legendrian singularities (caustics and wave fronts), and many others, most of which are now well known and collected in a series of papers and books (c.f. \cite{A1}-\cite{A4} and references therein). 

Surprisingly enough, the problem of symplectic classification of Hamiltonian systems with one-sided constraints $(\omega,f,H)$ has remained open throughout all these years, except from some recently treated cases  \cite{KM}, \cite{K1} (see also \cite{Kir}, \cite{K} for the $2$-dimensional case), where the first occurring singularities, corresponding to Melrose's glancing hypersurfaces above, were treated. In particular, the main result in \cite{K1} is an exact normal form with functional invariants (functional moduli) for the first occurring singularities of sections $H$ by symplectomorphisms of the standard Darboux normal form of the Hamiltonian system $(\omega,f)$ (see also Theorem \ref{thm3} in Section \ref{sec2}). Moreover, it was shown that the corresponding moduli space is ``huge", since it properly contains the space of all symplectic structures over the reduced phase space of the fiber $X_0$ (see also Corollary \ref{cor1} in Section \ref{sec2}). The purpose of the present paper is to continue the classification for all further typical singularity classes, and obtain exact normal forms with functional invariants for generic sections $H$ of Hamiltonian systems $(X,\omega,f)$.

The structure is as follows: in Section \ref{sec2} we describe the hierarchy of typical singularities and we formulate the corresponding theorems on normal forms, as well as some corollaries concerning the structure of the associated moduli spaces. Technically, the most important theorem here is Theorem \ref{thm2} which gives a preliminary normal form for any typical singularity $H$, containing $k$-functions defined over the orbit space of the system $(\omega,f)$, which is a quasi-symplectic space (and in fact a Poisson manifold), i.e. a $(\dim X-1)$-dimensional space endowed with a closed $2$-form of maximal rank (that is $\dim X-2$). This allows us to reduce the classification problem to the problem of classification of mappings $R=(R_0,\cdots,R_{k-1})$  with prescribed singularities in quasi-symplectic space.

In Section \ref{sec3} we present all the technical details which allow us to obtain exact normal forms for such mappings. They concern the symplectic classification of mappings $r=(r_0,\cdots,r_{2n-1})$ with certain Whitney-type singularities, naturally defined on the reduced phase space of the 0-level set $X_0$ of $f$, which is the subject of Theorem \ref{thm-wk}. The results in this section provide a direct generalisation of the results in \cite{K1}, which give an exact normal form with functional invariants for non-singular mappings $r=(r_0,\cdots,r_{2n-1})$, i.e. for diffeomorphisms in symplectic space.

Finally in Section \ref{sec4} we present the proofs of Theorems \ref{thm2}-\ref{thm7} on exact normal forms, each one of them occupying only a few lines, due to the results of Sections \ref{sec2} and \ref{sec3}. 

Before we close this section we remark that there are several important geometric-invariant objects associated to a section $H$ of a Hamiltonian system $(\omega,f)$ which are not mentioned in the present exposition. Following \cite{A11}, one may think for example the discriminant in the orbit space of the system, the (singular) Lagrangian manifolds of all solutions on them, e.t.c., all objects defining $1$-parameter families with respect to the values $f=t$ of the Hamiltonian, of the corresponding objects associated to the pair of hypersurfaces $(H,X_0)$. In fact, the underlying geometric theory of such an approach, is (in the holomorphic category) a symplectic version of the standard theory of the discriminant and the period map (monodromy, Gauss-Manin connections, mixed Hodge structures e.t.c.) for the series $A_k$ of simple boundary singularities of functions in Arnol'd's list \cite{A5}, already described in \cite{K} for the $2$-dimensional case. Indeed, as the experienced reader may verify, the functional invariants obtained in the present paper can be naturally related with the time functions, or equivalently, the action/period integrals of the (Liouville $1$-form $a$ of the) symplectic form $\omega=da$ along the ``vanishing half-cycles" (in the terminology of V. I. Arnol'd \cite{A5}), formed by the segments of the integral curves of the Hamiltonian vector field $Z_f$ between two consecutive  points of intersection with the hypersurface $H$, away from the discriminant. This geometric/cohomological description of the functional invariants announced here, as well as other more profound relations with the asymptotic expansion of these period integrals as one approaches the discriminant of the system (spectrum, asymptotic Hodge filtration e.t.c.), will be analysed in a subsequent paper.   


\section{Hierarchy of Singularities-Theorems on Normal Forms}
\label{sec2}

All the objects here are $C^{\infty}$ or analytic germs at the origin of $(X,0):=(\R^{2n+2},0)$, $n\geq 1$, unless otherwise stated. The results hold in the complex analytic (holomorphic) category in $(X,0):=(\C^{2n+2},0)$ as well, by considering diffeomorphisms (biholomorphisms) tangent to the identity. For the planar case $n=0$ we refer to \cite{K} for the holomorphic case and also to \cite{Kir}, \cite{KM}, \cite{K1} for the smooth case.

We describe here the hierarchy of singularities (partially presented also in \cite{A1}, \cite{M}, \cite{M1}) and we formulate theorems on normal forms for generic sections $H\subset \R^{2n+2}$ of a fixed regular (non-singular) Hamiltonian system $(\omega,f)$, $f(0)=0$, $df(0)\ne 0$: two sections $H=\{h=0\}$, $H'=\{h'=0\}$ will be called equivalent if there exists a symplectomorphism $\Phi$ of $\omega$, $\Phi^*\omega=\omega$, preserving also $f$, $\Phi^*f=f$, and an invertible function $u$ (a unit, $u(0)\ne 0$) such that:
\[\Phi^*h'=uh.\]
By the symplectic rectification theorem we may always fix the standard normal form of the pair $(\omega,f)$:
\begin{equation}
\label{nfHS}
\omega=dx\wedge dy+dp\wedge dq, \quad f=y,
\end{equation}
(where we denote by $dp\wedge dq:=\sum_{i=1}^ndp_i\wedge dq_i$ the standard symplectic form in $\mathbb{R}^{2n}$) and classify sections $H=\{h=0\}$ under the isotropy subgroup of this normal form.
 
\medskip

\noindent NOTATION: Throughout the paper we identify the hypersurface $H$ with its defining equation $H=\{h=0\}$ where $h(0)=0$, $dh(0)\ne 0$ (the definitions below are independent of the choice of $h$). We also denote by $X_0=\{f=0\}$ the hypersurface defined by the $0$-level set of $f$.

\medskip

\begin{definition}
\label{def1}
\it{We say that a section $H$ of a Hamiltonian system $(\omega,f)$ is non-singular if:
\[\{f,h\}(0)\ne 0.\]}
\end{definition}
The condition implies that the hypersurface $H$ is transversal to the Hamiltonian vector field $Z_f$ of $f$, the latter defined by standard symplectic duality:
\[Z_f\lrcorner \omega=df\Longleftrightarrow Z_f=\{f,\cdot\}.\]

Classification of non-singular sections is an easy exercise (using for example the Darboux-Givental theorem or any of its variations):

\begin{theorem}[\cite{A1}, \cite{M} and also \cite{KM}, \cite{K1}]
\label{thm1}
Any non-singular section $H$ of the Hamiltonian system $(\omega,f)=(\ref{nfHS})$ is equivalent to the normal form:
\begin{equation}
\label{nfS0}
H=\{x=0\}.
\end{equation} 
\end{theorem}
As it turns out these are the only simple singularities of sections (with no moduli)  that exist. Away from the non-singular case, functional moduli appear already for the first occurring singularities.  The hierarchy is as follows:

\subsection{Hierarchy of Singularities}

For economy in the exposition we will use, whenever is necessary, the following:

\medskip 

\noindent NOTATION: For a pair of functions $(f,h)$ we denote by $\{f,h\}_i$, $i\geq 0$, (resp. $\{h,f\}_i$) the $i$-times iterated Poisson brackets of $f$ with $\{f,h\}$ (and of $h$ with $\{h,f\}=-\{f,h\}$ respectively), i.e. under the following rule:
\[\{f,h\}_i:=Z_f^{i+1}(h), \quad \{h,f\}_i:=Z_h^{i+1}(f), \quad i\geq 0,\] 
so that:
\[\{f,h\}_0:=Z_f(h):=\{f,h\}, \quad \{f,h\}_1:=Z_f^2(h):=\{f,\{f,h\}\}, \quad  \cdots,\]  
\[(\{h,f\}_0:=Z_h(f):=\{h,f\}, \quad \{h,f\}_1:=Z_h^2(f):=\{h,\{h,f\}\}, \quad \cdots, \quad resp.)\]

\medskip 

\begin{definition}
\label{def2}
\it{A section $H$ of a Hamiltonian system $(\omega,f)$ is said to have a singularity of type $S_k$, $k\geq 1$, if the following conditions hold:
\begin{equation}
\label{ctr}
df\wedge dh(0)\ne 0,
\end{equation}
and
\begin{equation}
\label{cSk}
\{f,h\}_i(0)=0, \quad \forall i=0,\cdots,k-1,\quad \{f,h\}_{k}(0)\ne 0.
\end{equation}}
\end{definition}

In the definition above condition (\ref{ctr}) means that the hypersurface $X_0$ is transversal to $H$. Conditions (\ref{cSk}) imply that the Hamiltonian vector field $Z_f$ of $f$ has exactly $k$-order tangency with the hypersurface $H$. Thus, the singularities $S_k$ are of the standard Whitney type: the first occurring singularity class $S_1$ (fold singularities) is defined by $\{f,h\}(0)=0$, $\{f,\{f,h\}\}(0)\ne 0$, the next singularity class $S_2$ (cusp singularities) by $\{f,h\}(0)=0$, $\{f,\{f,h\}\}(0)= 0$, $\{f,\{f,\{f,h\}\}\}(0)\ne 0$ and so on. 

\begin{remark}
\label{rem1}
\normalfont{It is obvious from the definition that the set of points where a section $H$ has singularity $S_k$ forms a subset of codimension $k$ in $H$. Thus, the only typical singularity classes $S_k$ in $\R^{2n+2}$ are those that appear in the range $1\leq k \leq 2n+1$.}
\end{remark}

Each of the singularity classes $S_k$ is naturally stratified further to singularity subclasses defined by the relative positions of the function $f$ (of the hypersurface $X_0$) with respect to the characteristic line field of the hypersurface $H$, i.e. the one obtained by the 1-dimensional field of kernels of the restriction $\omega_H$ of the symplectic form $\omega$ on $H$. In that way a double index series of singularities arises: 

\begin{definition}
\label{def3}
A section $H\in S_k$ has a singularity of type $S_{k,l}$, $l\geq 1$, if the following further conditions hold:
\begin{equation}
\label{cSkl}
\{h,f\}_i(0)=0,\quad \forall i=1,\cdots,l-1, \quad \{h,f\}_{l}(0)\ne0.
\end{equation}
\end{definition}

As before, the conditions (\ref{cSkl}) are equivalent to the conditions that the Hamiltonian vector field $Z_H=Z_h|_H$ spanning the characteristic line field of $H$, has exactly $l$-order tangency with the hypersurface $X_0$.  Thus, the singularities $S_{k,l}$ are again of the standard Whitney-Whitney type: the first occurring singularity $S_{1,1}$ (fold-fold) is defined by:
\[\{f,h\}(0)=0, \quad \{f,\{f,h\}\}(0)\ne 0, \quad \{h,\{h,f\}\}(0)\ne 0,\]
whereas adjacent to it are the singularities $S_{1,2}$ (fold-cusp):
\[\{f,h\}(0)=\{h,\{h,f\}\}(0)=0, \quad \{h,\{h,\{h,f\}\}\}(0)\ne 0, \quad \{f,\{f,h\}\}(0)\ne 0,\]
as well as the singularities $S_{2,1}$ (cusp-fold):
\[\{f,h\}(0)=\{f,\{f,h\}\}(0)=0, \quad \{f,\{f,\{f,h\}\}\}(0)\ne 0, \quad \{h,\{h,f\}\}(0)\ne 0,\]
and so on.

\begin{remark}
\label{rem2}
\normalfont{As it is obvious again from the definition, the set of points where a section $H$ has singularity of type $S_{k,l}$ forms a subset of codimension $k+l-1$ in $H$. Thus, the typical singularity classes $S_{k,l}$ in $\R^{2n+2}$ are only those that appear in the range $1\leq k+l-1\leq 2n+1$.}
\end{remark}

Away from the singularites $S_{k,l}$ described above, there is one more typical singularity class, adjacent to the first occurring singularities $S_{1,1}$, which is defined by the loss of transversality of $f$ (equivalently, of $X_0$) with $H$, i.e. such that condition (\ref{ctr}) in Definition \ref{def2} ceases to hold.

\begin{definition}
\label{def4}
\it{We say that a section $H$ has a singularity of type $A_1$ if the restriction $f|_H$ of $f$ on $H$ has a non-degenerate critical point at the origin (i.e. it is a Morse function):
\begin{equation}
\label{cA1}
df|_{H}(0)=0, \quad d^2f|_{H}(0)\ne 0,
\end{equation}
and moreover the following conditions hold:
\begin{equation}
\label{cA11}
\{f,\{f,h\}\}(0)\ne 0, \quad \{h,\{h,f\}\}(0)\ne 0.
\end{equation}}
\end{definition}

\begin{remark}
\label{remA1}
\normalfont{The condition that the restriction $f|_H$ is a Morse function is obviously equivalent to the condition that the restriction $h|_{X_0}$ of the function $h$ defining $H=\{h=0\}$ on the $0$-level set $X_0=\{f=0\}$ of $f$, is also a Morse function, i.e. with a non-degenerate critical point at the origin:
\[dh|_{X_0}(0)=0,\quad d^2h|_{X_0}(0)\ne 0.\]
This, along with conditions (\ref{cA11}), gives an equivalent definition for $A_1$-singularities, more adequate for our purposes (see Theorem \ref{thm7} below).} 
\end{remark}

\subsection{Normal Forms and Functional Moduli}

Here we state the corresponding theorems on exact normal forms with functional invariants for all the typical singularities of sections $S_{k.l}$ and $A_1$ defined above. We give first a preliminary normal form (i.e. containing non-equivalent germs) which is in a sense the ``simplest" normal form, serving for the whole singularity class $S_k$, for each $k\geq 1$ (as well as for $A_1$). As it will become apparent in the text, having such a normal form is a decisive step in order to obtain further classification results for all singularity subclasses $S_{k,l}$. 

\begin{theorem}
\label{thm2}
Any section $H\in S_{k}$ of the Hamiltonian system $(\omega,f)=(\ref{nfHS})$ is equivalent to the normal form:
\begin{equation}
\label{nfSk}
H=\{x^{k+1}+\sum_{i=0}^{k-1}R_{i}(y,p,q)x^{i}=0\},
\end{equation}
for some functions $R_{i}(y,p,q)$, $i=0,\cdots,k-1$, such that $R_{i}(0)=0$, $i=0,\cdots,k-1$, and:
\begin{equation}
\label{ctr'}
dy\wedge dR_0(0)\ne 0,
\end{equation}
\begin{equation}
\label{cS11R'}
H\in S_{1,1}: \quad \partial_yR_0(0)\ne 0, 
\end{equation}  
\[H\in S_{1, l\geq 2}: \quad \partial_yR_0(0)=\{R_0,\partial_yR_0\}_i(0)=0, \quad i=0,\cdots,l-3,\]
\begin{equation}
\label{cS1lR'}
 \{R_0,\partial_yR_0\}_{l-2}(0)\ne 0 
\end{equation}  
\begin{equation}
\label{cSklR'}
H\in S_{k\geq 2,l}: \quad \{R_0,R_1\}_i(0)=0, \quad i=0,\cdots,l-2,  \quad \{R_0,R_1\}_{l-1}(0)\ne 0. 
\end{equation}  
\end{theorem}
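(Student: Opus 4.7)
The strategy is to prove Theorem~\ref{thm2} in two stages. Stage one produces the polynomial normal form~\eqref{nfSk} via Malgrange preparation in $x$ followed by a Moser homotopy that eliminates the $x^k$-coefficient; stage two translates the defining inequalities of the subclasses $S_{k,l}$ into the algebraic conditions~\eqref{ctr'}--\eqref{cSklR'} by direct computation in the prepared coordinates.

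In the Darboux chart~\eqref{nfHS} one has $Z_f=\partial_x$, so the $S_k$ conditions~\eqref{cSk} say precisely that $h$ vanishes to order $k+1$ in $x$ at the origin. Malgrange's (resp.\ Weierstrass') preparation theorem in the parameters $(y,p,q)$ yields
\[h=u\cdot\bigl(x^{k+1}+a_k(y,p,q)x^k+\cdots+a_0(y,p,q)\bigr),\]
with $u$ a unit and $a_i(0)=0$; modulo units we may take $h$ equal to this Weierstrass polynomial. The infinitesimal symplectomorphisms of $\omega$ preserving $f=y$ are exactly the Hamiltonian vector fields $Z_g$ of $x$-independent Hamiltonians $g=g(y,p,q)$ (the preservation of $y$ forces $\partial_xg=0$), and a direct computation gives $Z_g=g_y\partial_x+V_g$, where $V_g$ is the Hamiltonian field of $g$ relative to $dp\wedge dq$. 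I then run a Moser homotopy on the family
\[h_t=x^{k+1}+(1-t)a_k(y,p,q)x^k+\sum_{i=0}^{k-1}b_i(t,y,p,q)x^i,\qquad b_i(0,\cdot)=a_i,\]
looking for $g_t(y,p,q)$ with $g_t(0)=0$ solving the cohomological equation $\dot h_t+Z_{g_t}(h_t)=0$. Since $h_t$ is monic of degree $k+1$ in $x$ while both $\dot h_t$ and $Z_{g_t}(h_t)$ have $x$-degree at most $k$, the equation splits into a triangular system after matching coefficients of $x^j$ from $j=k$ downward. The top equation
\[(k+1)(g_t)_y+(1-t)V_{g_t}(a_k)=a_k\]
is a first-order linear PDE whose principal part $(k+1)\partial_y$ is transverse to $\{y=0\}$, hence solvable by the method of characteristics with initial datum $g_t|_{y=0}=0$; the remaining equations are ODEs in $t$ for the $b_i$ whose source terms vanish identically at the origin, so the conditions $b_i(t,0)=0$ propagate from $t=0$ to $t=1$. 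Integrating the flow of $Z_{g_t}$ furnishes the sought symplectomorphism, and $R_i:=b_i(1,\cdot)$ gives~\eqref{nfSk}.

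With $h=x^{k+1}+\sum_{i=0}^{k-1}R_ix^i$ in hand, the subclass conditions follow by explicit computation. One has $dh(0)=R_1(0)\,dx+dR_0(0)$; for $k=1$ the coefficient $R_1$ is absent, while for $k\geq 2$ the $S_k$ conditions force $R_1(0)=0$, so in every case $dh(0)=dR_0(0)$, and the transversality~\eqref{ctr} becomes~\eqref{ctr'}. For the iterated brackets $\{h,f\}_i=-Z_h^i(\partial_xh)$, one computes $Z_h$ from $Z_h\lrcorner\omega=dh$, whose $\partial_x$-component is $\sum_{i=0}^{k-1}x^i(R_i)_y$ and whose $\partial_y$-component is $-\partial_xh$. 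Evaluating at the origin using $R_i(0)=0$ and $x=0$: for $k=1$ the $\partial_x$-component of $Z_h(0)$ is $(R_0)_y(0)$, and a short induction shows $\{h,f\}_l(0)$ is a nonzero numerical multiple of $\{R_0,\partial_yR_0\}_{l-2}(0)$, yielding~\eqref{cS11R'}--\eqref{cS1lR'}; for $k\geq 2$ both the $\partial_x$- and $\partial_y$-components of $Z_h(0)$ vanish, so the leading behaviour of the iterated bracket is controlled by $R_1$ and one finds $\{h,f\}_l(0)\propto\{R_0,R_1\}_{l-1}(0)$, giving~\eqref{cSklR'}.

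The main obstacle is the solvability of the leading Moser equation within the restrictive class of $x$-independent Hamiltonians: a naive Tschirnhaus shift $x\mapsto x-a_k/(k+1)$ is symplectic only when $a_k$ depends on $y$ alone, so one cannot simply kill $a_k$ by a coordinate translation. What rescues the argument is the triangular action of $Z_g$ on polynomials in $x$ together with the transversality of the principal symbol $(k+1)\partial_y$ to $\{y=0\}$, which reduces the cohomological equation to a solvable first-order linear PDE and a chain of ODEs with vanishing source at the origin.
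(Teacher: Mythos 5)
Your proof is essentially correct, but it reaches the preliminary normal form (\ref{nfSk}) by a genuinely different route than the paper. The paper does \emph{not} insist on staying inside the isotropy group of $(\omega,f)$ at the crucial step: it kills the $x^k$-coefficient by the plain (non-symplectic) shift $x\mapsto x-\tfrac{1}{k+1}R_k(y,p,q)$, accepts that $\omega$ is thereby deformed to $dx\wedge dy+\widehat{\omega}$ with $\partial_x\lrcorner\widehat{\omega}=0$, and then restores the Darboux form by applying an odd-dimensional Darboux theorem to the quasi-symplectic form $\widehat{\omega}$ on the orbit space $\R^{2n+1}_{(y,p,q)}$, lifting the resulting diffeomorphism $(y,p,q)\mapsto(y,\Phi(y,p,q))$ back to the total space. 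You instead solve the same normalization problem by a Moser homotopy generated by $x$-independent Hamiltonians $g_t(y,p,q)$, reducing it to the transport equation $(k+1)\partial_yg_t-(1-t)V_{a_k}(g_t)=a_k$ plus a triangular chain of evolution equations for the lower coefficients. Both work; you correctly identified the crux (the Tschirnhaus shift is not symplectic unless $a_k=a_k(y)$) and resolved it with a PDE argument in place of the paper's ``break symplecticity, then repair by Darboux'' device. What the paper's route buys, beyond brevity, is the structural byproduct used throughout Section \ref{sec2}: the residual symmetries preserving $(\omega,f)$ and $C_k=\{x=0\}$ act through (quasi-)symplectomorphisms of the orbit space, which is what reduces the remaining classification to the coefficient map $R$. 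Your route keeps everything symplectic at each stage but does not make this reduction visible. One point you should make explicit: for the flow of $Z_{g_t}$ to be a germ-level equivalence and for $R_i(0)=0$ to survive, you need $Z_{g_t}(0)=0$; this does follow from your initial datum $g_t|_{y=0}=0$ together with the top equation evaluated at the origin, but it deserves a line.

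In the verification of (\ref{ctr'})--(\ref{cSklR'}) your final formulas agree with the paper's Remark \ref{rem0}, but one intermediate claim is wrong: for $k\geq 2$ the $\partial_x$-component of $Z_h$ at the origin is $\partial_yR_0(0)$, which does \emph{not} vanish in general (indeed $g'(0)\neq 0$ is the generic situation in Theorem \ref{thm6}). What actually makes the induction close is that the quantities this component multiplies ($\partial_x^2h$, $\partial_xR_1$-free terms, etc.) vanish at the origin to the required order, not that the component itself vanishes. Redo that bookkeeping; the conclusion $\{h,f\}_i(0)=\{R_0,R_1\}_{i-1}(0)$ up to sign is unaffected.
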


\begin{remark}
\label{rem0}
\normalfont{Condition (\ref{ctr'}) on the function $R_0$ is just the transversality condition (\ref{ctr}) in Definition \ref{def2}, in terms of the normal form (\ref{nfSk}). Condition (\ref{cS11R'}) follows by definition $\{h,\{h,f\}(0)\ne 0$. Condition (\ref{cS1lR'}) follows by condition (\ref{cSkl}) in Definition \ref{def3} and a simple calculation:
 \[\{h,f\}_1(0)=\partial_yR_0(0),\quad \{h,f\}_i(0)=\{R_0,\partial_yR_0\}_{i-2}(0), \quad \forall i\geq 2,\]
whereas condition (\ref{cSklR'}) follows also by condition (\ref{cSkl}) in Definition \ref{def3} where now, as one may easily verify:
\[\{h,f\}_i(0)=\{R_0,R_1\}_{i-1}(0), \quad \forall i\geq 1.\]}
\end{remark}

Theorem \ref{thm2} above allows us to reduce the classification problem of sections $H\in S_{k,l}$ to the problem of classification of the associated coefficient functions (the coefficient mapping) $R(y,p,q)=(R_0(y,p,q),\cdots,R_{k-1}(y,p,q))$ in normal form (\ref{nfSk}), satisfying conditions (\ref{ctr'})-(\ref{cSklR'}), under (quasi-)symplectomorphisms of the pair $(y,dp\wedge dq)$, i.e. symplectomorphisms of the form:
\[(y,p,q)\mapsto (y,\Phi(p,q)), \quad \Phi^{*}(dp\wedge dq) =dp\wedge dq.\]
Indeed, any symplectomorphism of the pair $(\omega,f)=(\ref{nfHS})$ which also preserves the "$k$-singular locus" of $H$:
\[C_k:=\{\{f,h\}_{k-1}=0\}=\{x=0\},\]
is necessarily of the form:
\[(x,y,p,q)\mapsto (x,y,\Phi(p,q)), \quad \Phi^{*}(dp\wedge dq) =dp\wedge dq.\]

In order to obtain exact normal forms for such mappings, notice that conditions (\ref{ctr'})-(\ref{cSklR'}) in Theorem \ref{thm2} define conditions only on the first pair of functions $(R_0,R_1)$, and in fact, on their restrictions $(r_0(p,q),r_1(p,q)):=(R_0(0,p,q),R_1(0,p,q))$ on the reduced symplectic space $(\R^{2n}_{(p,q)},dp\wedge dq)$ of the $0$-fiber $X_0=\{y=0\}$ of $f=y$, whereas the rest of the functions $R_i$, $i=2,\cdots,k-1$, are arbitrary. Thus they can be supposed to satisfy some generic condition with respect to the initial pair $(R_0,R_1)$, in the sense that the corresponding jet of the coefficient map $R=(R_0,\cdots,R_{k-1})$ belongs in an open set in the appropriate space of jets of mappings in quasi-symplectic space $(\R^{2n+1}_{(y,p,q)}, dp\wedge dq)\rightarrow \R^k$, satisfying conditions (\ref{ctr'})-(\ref{cSklR'}) above. Of course, the choice is not unique and different choices lead to different exact normal forms with functional moduli, given possibly by a different number of functions in a different number of variables. Below we present the corresponding results under certain natural generic conditions on the Taylor expansions of the functions $R_i(y,p,q)$, $i=0,\cdots,k-1$, along the variable $y=f$:
\[R_i(y,p,q)=\sum_{j\geq 0}r_{i,j}(p,q)y^j, \quad i=0,\cdots,k-1.\]
For economy in the exposition we will use throughout the paper the following: 

\medskip 

\noindent NOTATION:  We denote by $\I_i$, $i=1,\cdots, 2n$, the nested sequence of ideals generated by the first $i$-Darboux coordinate functions of the Darboux normal form $\widetilde{\omega}=dp\wedge dq(=\sum_{i=1}^{n}dp_i\wedge dq_i)$ of a symplectic structure in $\R^{2n}$, but in reverse order, i.e.:
\[\I_1:=<q_1>,\]
\[\I_2:=<q_1,p_1>,\]
\[\vdots\]
\[\I_{2n-2}:=<q_1,p_1,\cdots,q_{n-1},p_{n-1}>,\]
\[\I_{2n-1}:=<q_1,p_1,\cdots,q_{n-1},p_{n-1},q_n>,\]
\[\I_{2n}:=<q_1,p_1,\cdots,q_{n-1},p_{n-1},q_n,p_n>.\]

\subsubsection{The Series $S_{k,1}$, $k\geq 1$.}
We start first with the classification of singularities $S_{k,1}$, for $1\leq k\leq 2n$, due to the fact that the statement of the theorem for the isolated case $k=2n+1$ is different (but its proof is similar/easier).

\begin{theorem}
\label{thm3}
In the space of $(\max\{k,2n-k+1\})$-jets of sections $H\in S_{k,1}$, $1\leq k\leq 2n$, of the Hamiltonian system $(\omega,f)=(\ref{nfHS})$ there exists an open set $U$, such that any section $H$ with $j^{\max\{k,2n-k+1\}}H\in U$ is equivalent to the normal form (\ref{nfSk}), where:
\begin{equation}
\label{nfRiSk1}
R_i(y,p,q)=r_{i}(p,q)+\phi_i(y,p,q)y, \quad i=1,\cdots, k-1,
\end{equation}
\begin{equation}
\label{nfR0Sk1}
R_0(y,p,q)=g(y)+p_1+\sum_{j=1}^{2n-k}r_{k-1+j}(p,q)y^j+\phi_0(y,p,q)y^{2n-k+1},
\end{equation}
with: 
\begin{equation}
\label{nfgSk1}
g'(0)\ne 0,
\end{equation} 
and the functions $r_i$, $i=1,\cdots, 2n-1$ are of the form:
\begin{equation}
\label{nfroddSk1}
r_{2m+1}\in \I_{2m+1}, \quad \partial_{q_{m+1}}r_{2m+1}(0)\ne 0, \quad m=0,\cdots,n-1,
\end{equation}
\begin{equation}
\label{nfrevenSk1}
r_{2m}(p,q)=p_{m+1}+\widetilde{r}_{2m}(p,q), \quad \widetilde{r}_{2m}\in \I_{2m}, \quad m=1,\cdots,n-1.
\end{equation}
The set of functions:
\begin{equation}
\label{mk1}
\text{m}_{k,1}(H)=\big \{\{\phi_i\}_{i=0}^{k-1},g,\{r_{2m+1}\in \I_{2m+1}\}_{m=0}^{n-1},\{\widetilde{r}_{2m}\in \I_{2m}\}_{m=1}^{n-1} \big \}
\end{equation}
is a complete set of functional moduli for the classification of generic sections $H\in S_{k,1}$, $1\leq k \leq 2n$: two sections $H$, $H'$ of $(\omega,f)=(\ref{nfHS})$ are equivalent if and only if $\text{m}_{k,1}(H)\equiv \text{m}_{k,1}(H')$.
\end{theorem}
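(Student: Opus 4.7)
The plan is to deduce Theorem \ref{thm3} as a direct consequence of Theorem \ref{thm2} combined with the symplectic classification of mappings on the reduced symplectic space announced as Theorem \ref{thm-wk} (in its non-singular sub-case, which generalises \cite{K1}). First, I would apply Theorem \ref{thm2} to put $H$ into its preliminary normal form (\ref{nfSk}). By the discussion immediately following Theorem \ref{thm2}, the residual group of symplectomorphisms of $(\omega,f)=(\ref{nfHS})$ preserving the singular locus $C_k=\{x=0\}$ consists precisely of the maps $(x,y,p,q)\mapsto (x,y,\Phi(p,q))$ with $\Phi^{*}(dp\wedge dq)=dp\wedge dq$, and such a $\Phi$ acts on the coefficient tuple by the simultaneous pullback $R_i(y,p,q)\mapsto R_i(y,\Phi(p,q))$, $i=0,\ldots,k-1$.

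Next, I would expand each coefficient along $y$, $R_i(y,p,q)=\sum_{j\geq 0}r_{i,j}(p,q)y^j$, and single out the $2n$-tuple of functions on reduced phase space
\[\big(r_{0,0},r_{0,1},\ldots,r_{0,2n-k},\,r_{1,0},r_{2,0},\ldots,r_{k-1,0}\big),\]
composed of the first $2n-k+1$ Taylor coefficients of $R_0$ together with the leading Taylor coefficients of $R_1,\ldots,R_{k-1}$. The open set $U$ in the statement is then defined by requiring that this $2n$-tuple, viewed as a germ $(\R^{2n}_{(p,q)},0)\to(\R^{2n},0)$, be a local diffeomorphism at the origin. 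Openness is automatic; for $S_{k,1}$-germs the hypothesis $\{R_0,R_1\}(0)\neq 0$ (for $k\geq 2$) or $\partial_yR_0(0)\neq 0$ (for $k=1$) already secures two of the needed independence directions, and the remaining first-derivative conditions carve out a dense open subset of the appropriate jet space of order $\max\{k,2n-k+1\}$.

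The core step is then to apply Theorem \ref{thm-wk} to this principal $2n$-tuple. Since the reduced map is a local diffeomorphism, we sit in the non-singular sub-case and there exists a symplectomorphism $\Phi$ of $(dp\wedge dq)$ after which the pulled-back tuple takes exactly the form (\ref{nfroddSk1})--(\ref{nfrevenSk1}) of the statement: $r_{0,0}=p_1$, $r_{0,j}=r_{k-1+j}$ for $j=1,\ldots,2n-k$, and the remaining $r_{i,0}$ normalised as prescribed. The one-variable function $g(y)$ is identified with the $y$-Taylor series of $R_0$ evaluated at $(p,q)=0$, with $g'(0)\neq 0$ forced by the transversality $(\ref{ctr'})$. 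All Taylor coefficients of the $R_i$ that were not consumed by the principal tuple---those of $y$-order $\geq 1$ for $i=1,\ldots,k-1$ and of $y$-order $\geq 2n-k+1$ for $i=0$---are absorbed into the tail functions $\phi_i(y,p,q)$, producing the announced functional moduli.

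Completeness of $\text{m}_{k,1}(H)$ follows from the uniqueness part of Theorem \ref{thm-wk}: after normalisation, the only symplectomorphism $\Phi(p,q)$ preserving the shape (\ref{nfR0Sk1})--(\ref{nfrevenSk1}) is the identity, so no equivalence can relate two normalised germs with distinct moduli. I expect the main obstacle to be entirely concentrated in Theorem \ref{thm-wk} itself---the parametric Darboux-type classification of reduced mappings with Whitney-type singularities---which is why the present theorem is stated as a consequence of that result; once \ref{thm-wk} is granted, the proof of Theorem \ref{thm3} reduces to the bookkeeping outlined above of which $y$-Taylor slots of $(R_0,\ldots,R_{k-1})$ are used up by the reduced symplectic normalisation and which survive as genuine functional moduli on quasi-symplectic space.
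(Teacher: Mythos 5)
Your proposal follows essentially the same route as the paper's proof: reduce to the preliminary normal form of Theorem \ref{thm2}, Taylor-expand the coefficients $R_i$ along $y$, assemble the $2n$-tuple of reduced coefficient functions on $(\R^{2n}_{(p,q)},dp\wedge dq)$, define $U$ by the requirement that this tuple be a local diffeomorphism, and conclude by the $s=0$ case of Theorem \ref{thm-wk}. One small correction: $g'(0)\ne 0$ is not forced by the transversality condition (\ref{ctr'}) (which only controls $dR_0|_{y=0}$); for $k=1$ it follows from (\ref{cS11R'}), i.e.\ $\partial_yR_0(0)\ne 0$, whereas for $k\geq 2$ it is an additional genericity assumption on the transversal type of $H$, as the paper notes in Remark \ref{rem3}.
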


\begin{example}
\label{ex1}
\normalfont{The normal form for the first occurring singularities $S_{1,1}$ has been recently obtained in \cite{K1}:
\[H=\{x^2+R_0(y,p,q)=0\},\]
where:
\[R_0(y,p,q)=g(y)+\sum_{m=0}^{n-1}r_{2m+1}(p,q)y^{2m+1}+\sum_{m=0}^{n-1}(p_{m+1}+\widetilde{r}_{2m}(p,q))y^{2m}+\phi_0(y,p,q)y^{2n},\]
(where $\widetilde{r}_0\equiv 0$) with the set of functional invariants:
\[\text{m}_{1,1}(H)=\big \{\phi_0,g,\{r_{2m+1}\in \I_{2m+1}\}_{m=0}^{n-1},\{\widetilde{r}_{2m}\in \I_{2m}\}_{m=1}^{n-1} \big \},\]
where $g'(0)\ne 0$. A different exact normal form has also been obtained in \cite{KM} (see again \cite{K1} for the diffeomorphism sending one normal form to the other). }
\end{example}

\begin{remark}
\label{rem3}
\normalfont{For the $S_{1,1}$ case, the condition (\ref{nfgSk1}) that the functional invariant $g(y)$ has non-vanishing $1$-jet, $g'(0)\ne 0$, is implied by the condition $\{h,\{h,f\}\}(0)\ne 0$. For the further singularity classes $S_{k,1}$, $k\geq 2$, it is not implied by any condition, but instead it is a natural generic condition related to the ``transversal type" $H\cap\{p=q=0\})$ of $H\in S_{k,1}$: the intersection of the section $H$ with the $2$-dimensional symplectic subspace $(\R^2_{(x,y)},dx\wedge dy)$ defined by the $0$-fiber of the double projection $\pi(x,y,p,q)=(p,q)$ (from the total space $\R^{2n+2}_{(x,y,p,q)}$ to the orbit space $\R^{2n+1}_{(y,p,q)}$ of $Z_f=\partial_x$ and then to the reduced phase space $\R^{2n}_{(p,q)}$ of the fiber $X_0$ of $f=y$), is a smooth plane curve, or equivalently $dh\wedge (dp\wedge dq)^n(0)\ne 0$.
}
\end{remark}

As it will become apparent in the proof of the theorem (see Section \ref{sec4:2}), the open set $U$ in the space of jets of sections $H\in S_{k,1}$ appearing in its statement, is defined by the condition that the associated map $r(p,q)=(p_1,r_1(p,q),\cdots,r_{2n-1}(p,q))$ obtained by the first $2n$-coefficients in the Taylor expansions along $f=y$ of the functions $R_i(y,p,q)$, $i=0,\cdots,k-1$, given by (\ref{nfRiSk1})-(\ref{nfR0Sk1}), defines a diffeomorphism in the reduced phase space $(\R^{2n}_{(p,q)},dp\wedge dq)$ of the $0$-level set $X_0=\{y=0\}$ of $f=y$. From the results of Section \ref{sec3} (Theorem \ref{thm-wk}), it follows that the expressions (\ref{nfroddSk1})-(\ref{nfrevenSk1}) provide the exact normal form for the diffeomorphism $r\in \text{Diff}(\R^{2n})$, under symplectomorphisms $\text{Symp}(\widetilde{\omega})$ of the Darboux normal form $\widetilde{\omega}=dp\wedge dq$. Bringing back the mapping $r$ to the identity, $r(p,q)=(p,q)$ we immediately see that the new symplectic form $(r^{-1})^*\widetilde{\omega}$ is a functional invariant, and thus the invariant ideals $\I_i$, $i=1,\cdots,2n-1$ appearing in normal form (\ref{nfroddSk1})-(\ref{nfrevenSk1}), give a natural parametrisation of the space $\Omega^2_S(\R^{2n})$ of all symplectic structures in $\R^{2n}$, which in that way becomes an ``infinite dimensional homogeneous space":
\[\frac{\text{Diff}(\R^{2n})}{\text{Symp}(\widetilde{\omega})}\cong \Omega^2_S(\R^{2n})\cong C^{\infty}(\R^{2n})^{n(2n-1)},\]
with the last isomorphism obtained by expanding each of the functions in the corresponding ideal $\I_i$, as a sum over its generators  (c.f. \cite{K1} for details). From this it follows:

\begin{corollary}
\label{cor1}
The moduli space $\mathcal{M}_{k,1}(U)$ of generic sections $H\in S_{k,1}$, $1\leq k\leq 2n$, is isomorphic to the product:
\[\mathcal{M}_{k,1}(U)\cong \Omega^2_S(\R^{2n})\times \text{Diff}(\R)\times C^{\infty}(\R^{2n+1})^k.\]
In particular,
\[\mathcal{M}_{2n,1}(U)\cong \Omega^2_{QS}(\R^{2n+1}),\]
is the space of all quasi-symplectic structures in $\R^{2n+1}$.
\end{corollary}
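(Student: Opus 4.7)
The plan is to unpack Theorem \ref{thm3} and read off the product structure, so the proof reduces to identifying each factor concretely and then checking the special case $k=2n$.

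First, I would invoke the completeness part of Theorem \ref{thm3}: two generic sections $H,H'\in S_{k,1}$ are equivalent if and only if $\text{m}_{k,1}(H)\equiv \text{m}_{k,1}(H')$. Consequently the assignment $H\mapsto \text{m}_{k,1}(H)$ induces a bijection between $\mathcal{M}_{k,1}(U)$ and the set of admissible tuples
\[\big(\{\phi_i\}_{i=0}^{k-1},\, g,\, \{r_{2m+1}\in\I_{2m+1}\}_{m=0}^{n-1},\, \{\widetilde{r}_{2m}\in\I_{2m}\}_{m=1}^{n-1}\big).\]
The problem thus becomes to identify this parameter set with the stated product.

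Next, I would treat the three groups of factors separately. The functions $\phi_i(y,p,q)$, $i=0,\dots,k-1$, appearing in (\ref{nfRiSk1})-(\ref{nfR0Sk1}) are unconstrained germs in $C^\infty(\R^{2n+1})$, which accounts for the factor $C^\infty(\R^{2n+1})^k$. The scalar function $g(y)$ with $g(0)=0$ and $g'(0)\ne 0$ is exactly a germ of a local diffeomorphism of $\R$ fixing the origin, so it contributes $\text{Diff}(\R)$. Finally, the families $\{r_{2m+1}\}_{m=0}^{n-1}$ and $\{\widetilde{r}_{2m}\}_{m=1}^{n-1}$ constitute precisely the exact normal form supplied by Theorem \ref{thm-wk} for a diffeomorphism $r\in\text{Diff}(\R^{2n})$ modulo the action of $\text{Symp}(\widetilde\omega)$, $\widetilde\omega=dp\wedge dq$. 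Using the correspondence $[r]\mapsto (r^{-1})^*\widetilde\omega$ recalled in the discussion preceding the corollary, I identify $\text{Diff}(\R^{2n})/\text{Symp}(\widetilde\omega)\cong \Omega^2_S(\R^{2n})$, which gives the remaining factor. Assembling these three identifications produces the claimed isomorphism.

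For the special case $k=2n$, I would argue as follows. The coefficient mapping $R=(R_0,\dots,R_{2n-1})\colon (\R^{2n+1}_{(y,p,q)},0)\to(\R^{2n}_{(p,q)},0)$ associated to the normal form (\ref{nfSk}) is, under the genericity condition defining $U$, a submersion, hence $\Omega_R:=R^*(dp\wedge dq)$ is a closed $2$-form on $\R^{2n+1}$ of maximal rank $2n$, i.e.\ a quasi-symplectic structure. The constructions of the preceding paragraph show that, as $(\{\phi_i\}, g, \{r_{2m+1}\},\{\widetilde r_{2m}\})$ ranges freely over its admissible set, the form $\Omega_R$ ranges over all germs in $\Omega^2_{QS}(\R^{2n+1})$: the symplectic factor $\Omega^2_S(\R^{2n})$ parametrises the restriction to a transversal leaf $\{y=0\}$, the factor $\text{Diff}(\R)$ parametrises the Casimir direction, and the $2n$ free functions $\phi_i$ encode the $y$-dependence of the family of symplectic forms on the leaves, which by the previous analysis exhausts all possibilities. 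Conversely, any germ in $\Omega^2_{QS}(\R^{2n+1})$ can be presented as such a pullback by straightening its characteristic foliation and applying the Darboux-type classification on transversal leaves.

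The main obstacle I anticipate is the last step: writing down the explicit inverse assignment $\Omega\mapsto R$ that recovers the normal-form data from an arbitrary quasi-symplectic germ, and verifying that this is well-defined. The bulk of the work, however, is already packaged into Theorem \ref{thm3} and Theorem \ref{thm-wk}; the corollary is essentially a bookkeeping statement that reorganises their output as an infinite-dimensional homogeneous-space-type description of the moduli space.
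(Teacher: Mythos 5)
Your proposal is correct and follows essentially the same route as the paper: the first isomorphism is obtained by reading off the three blocks of the moduli set $\text{m}_{k,1}(H)$ from Theorem \ref{thm3} (the $k$ free functions $\phi_i$, the germ $g\in\text{Diff}(\R)$, and the ideal-valued data identified with $\Omega^2_S(\R^{2n})\cong\text{Diff}(\R^{2n})/\text{Symp}(\widetilde\omega)$ via Theorem \ref{thm-wk}), and the case $k=2n$ is handled by viewing the coefficient map $(y,p,q)\mapsto(y,R_0,\dots,R_{2n-1})$ as a diffeomorphism of the quasi-symplectic space $(\R^{2n+1},dp\wedge dq)$, giving $\text{Diff}(\R^{2n+1})/\text{QSymp}(\widehat\omega)\cong\Omega^2_{QS}(\R^{2n+1})$. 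The surjectivity issue you flag at the end is resolved exactly as you suggest, by the odd-dimensional Darboux theorem already invoked in the proof of Theorem \ref{thm2}.
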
 
\begin{proof}
The first isomorphism follows immediately from (\ref{mk1}) which shows that starting from $k=1$, the functional moduli $m_{k,1}(H)$ grow with $k$ by adding $k$ functions of $(2n+1)$-variables (the functions $\{\phi_i\}_{i=1}^k$ in (\ref{mk1})). For the second isomorphism notice that the adjoint map:
\[(y,p,q)\mapsto (y,R_0(y,p,q),\cdots,R_{2n-1}(y,p,q))\]
also defines a diffeomorphism in the quasi-symplectic space $(\mathbb{R}^{2n+1}_{(y,p,q)},dp\wedge dq)$. If we denote by $\text{QSymp}(\widehat{\omega})$ the group of quasi-symplectomorphisms of $\widehat{\omega}=dp\wedge dq$ in $\R^{2n+1}_{(y,p,q)}$, we conclude by the same argument as above the isomorphism:
\[\frac{\text{Diff}(\R^{2n+1})}{\text{QSymp}(\widehat{\omega})}\cong \Omega^2_{QS}(\R^{2n+1}).\] 
\end{proof}

\begin{remark}
\label{rem4}
\normalfont{The moduli spaces described above depend a priori on the choice of the open set $U$ and in particular on the choice of the normal form. Different normal forms might lead to different moduli spaces and in these terms, the problem to obtain a canonical description (independent of coordinate choices) of the corresponding moduli spaces remains open. This would be useful, among others, in order to calculate the Poincar\'e series of the classification problem, a problem in the circle of problems proposed by Arnol'd, c.f. \cite{A6}.}
\end{remark}

As far as it concerns the classification of isolated singularities $S_{2n+1,1}$ we obtain by the same method:
\begin{theorem}
\label{thm4}
Any generic section $H\in S_{2n+1,1}$ of the Hamiltonian system $(\omega,f)=(\ref{nfHS})$ is equivalent to the normal form (\ref{nfSk}), where $\partial_{y}R_{2n}(0)\ne 0$ and the functions $R_i(y,p,q)$, $i=0,\cdots,2n-1$, are of the form:
\begin{equation}
\label{nfRiS2n+11}
R_i(y,p,q)=r_{i}(p,q)+\phi_i(y,p,q)y, \quad i=0,\cdots, 2n-1,
\end{equation}
where $\phi_0(0)\ne 0$ and: 
\begin{equation}
\label{nfr02n+1}
r_0(p,q)=p_1
\end{equation}
\begin{equation}
\label{nfroddS2n+11}
r_{2m+1}\in \I_{2m+1}, \quad \partial_{q_{m+1}}r_{2m+1}(0)\ne 0, \quad m=0,\cdots,n-1,
\end{equation}
\begin{equation}
\label{nfrevenS2n+11}
r_{2m}(p,q)=p_{m+1}+\widetilde{r}_{2m}(p,q), \quad \widetilde{r}_{2m}\in \I_{2m}, \quad m=1,\cdots,n-1.
\end{equation}
The set of functions:
\[\text{m}_{2n+1,1}(H)=\big \{R_{2n}, \{\phi_i\}_{i=0}^{2n-1},\{r_{2m+1}\in \I_{2m+1}\}_{m=0}^{n-1},\{\widetilde{r}_{2m}\in \I_{2m}\}_{m=1}^{n-1} \big \}\]
is a complete set of functional moduli for the classification of generic sections $H\in S_{2n+1,1}$: two sections $H$, $H'$ of $(\omega,f)=(\ref{nfHS})$ are equivalent if and only if $\text{m}_{2n+1,1}(H)\equiv \text{m}_{2n+1,1}(H')$.
\end{theorem}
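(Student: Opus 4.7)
My plan is to follow the same pattern as the proof of Theorem \ref{thm3}, with only the combinatorial distribution of Taylor coefficients between ``diffeomorphism slots'' and ``moduli'' being different. The starting point is Theorem \ref{thm2} applied with $k=2n+1$, which brings $H$ to
\begin{equation*}
H = \Big\{x^{2n+2} + \sum_{i=0}^{2n} R_i(y,p,q)\, x^i = 0\Big\},
\end{equation*}
with $R_i(0)=0$, and reduces the remaining equivalence group to maps of the form $(x,y,p,q)\mapsto(x,y,\Phi(p,q))$ with $\Phi^*(dp\wedge dq)=dp\wedge dq$, since any equivalence preserving the preliminary form must fix the singular locus $\{x=0\}$.

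Next, I would write $R_i(y,p,q) = r_i(p,q) + \phi_i(y,p,q)\,y$ with $r_i := R_i|_{y=0}$ for $i=0,\ldots,2n-1$, leaving $R_{2n}$ untouched. The genericity hypothesis that cuts out the open set addressed by the theorem is that the associated map
\begin{equation*}
r := (r_0,\ldots,r_{2n-1}) : (\R^{2n}_{(p,q)},0)\to(\R^{2n},0)
\end{equation*}
is a local diffeomorphism, a strengthening of the defining condition $\{r_0,r_1\}(0)\ne 0$ of $S_{2n+1,1}$, together with the ``transversal type'' condition $\phi_0(0)\ne 0$ (equivalently $\partial_yR_0(0)\ne 0$, coming from the analogue of Remark \ref{rem3}) and the stated condition $\partial_yR_{2n}(0)\ne 0$.

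The core step is then to invoke Theorem \ref{thm-wk} of Section \ref{sec3}, which provides an exact normal form for a diffeomorphism $r \in \text{Diff}(\R^{2n},0)$ under the action of $\text{Symp}(dp\wedge dq)$ by pullback on the source. Aligning the first coordinate via $r_0 = p_1$, the theorem then forces the remaining normalizations $r_{2m}(p,q) = p_{m+1} + \widetilde{r}_{2m}$ with $\widetilde{r}_{2m} \in \I_{2m}$ and $r_{2m+1} \in \I_{2m+1}$ with $\partial_{q_{m+1}}r_{2m+1}(0)\ne 0$, exactly as stated. Because the symplectic gauge $\Phi$ has now been completely used up to normalize $r$, the remaining data $\{\phi_i\}_{i=0}^{2n-1}$ in $(y,p,q)$ together with the full ``overflow'' coefficient $R_{2n}(y,p,q)$ become free functional invariants; the completeness of the moduli list $\mathrm{m}_{2n+1,1}(H)$ follows from the uniqueness clause of Theorem \ref{thm-wk}, which guarantees that no nontrivial $\Phi$ preserves the normalized $r$.

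The only substantive input is Theorem \ref{thm-wk}, so the main obstacle is packaged there and is not addressed in this proof. The structural novelty compared with Theorem \ref{thm3} is purely an accounting one: in the range $1\le k\le 2n$ one had to borrow Taylor coefficients $r_{k-1+j}(p,q)\,y^j$, $j=1,\ldots,2n-k$, from $R_0$ in order to fill a $2n$-tuple feeding into Theorem \ref{thm-wk}; at the boundary case $k=2n+1$ the $2n$ diffeomorphism slots are already filled by $(R_0,\ldots,R_{2n-1})|_{y=0}$, and the single leftover function $R_{2n}$ has no ``room'' to be further normalized and so joins the list of functional moduli.
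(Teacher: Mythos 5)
Your proposal is correct and follows essentially the same route as the paper: apply Theorem \ref{thm2} with $k=2n+1$, split $R_i=r_i+\phi_i y$ for $i=0,\dots,2n-1$, impose the genericity that $(r_0,\dots,r_{2n-1})$ is a local diffeomorphism of $(\R^{2n}_{(p,q)},dp\wedge dq)$ (the paper phrases this as $dR_0\wedge\cdots\wedge dR_{2n}(0)\ne 0$, which packages your three conditions together), and conclude by Theorem \ref{thm-wk} with $s=0$, leaving $R_{2n}$ and the $\phi_i$ as free functional moduli. Your accounting of why $R_{2n}$ has no room to be normalized matches the paper's reasoning exactly.
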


\begin{remark}
\normalfont{The condition that the function $\phi_0$ is non-vanishing at the origin $\phi_0(0)\ne 0$, is a generic condition related again to the transversal type $H\cap \{p=q=0\}$ of $H$, exactly as in Remark \ref{rem3}.}
\end{remark}

The genericity condition stated in the theorem is simply the functional independence of the $(2n+1)$-functions $R_i(y,p,q)$ at the origin of $\R^{2n+1}_{(y,p,q)}$:
\[dR_0\wedge \cdots \wedge dR_{2n}(0)\ne 0.\]
It is easy to see now that the moduli space $\mathcal{M}_{2n+1,1}$ is obtained by the space $\Omega^2_{QS}(\R^{2n+1})$ of all quasi-symplectic structures in $\R^{2n+1}$, by adding one function of $(2n+1)$-variables (the function $R_{2n}(y,p,q)$ in the statement of the theorem). Indeed, the adjoint map:
\[(y,p,q)\mapsto (y,R_0(y,p,q),\cdots,R_{2n-1}(y,p,q)),\]
is by assumption non-singular (i.e. $dy\wedge dR_0\wedge \cdots \wedge dR_{2n-1}(0)\ne 0$) and thus defines a diffeomorphism in the quasi-symplectic space $(\R^{2n+1}_{(y,p,q)}, dp\wedge dq)$. Thus, by Corollary \ref{cor1} above we obtain:

\begin{corollary}
\label{cor2}
The moduli space $\mathcal{M}_{2n+1,1}$ of generic sections $H\in S_{2n+1,1}$, is isomorphic to the product: 
\[\mathcal{M}_{2n+1,1}\cong \Omega^2_{QS}(\R^{2n})\times C^{\infty}(\R^{2n+1}).\]
\end{corollary}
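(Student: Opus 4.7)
The plan is to combine the exact normal form of Theorem \ref{thm4} with the homogeneous-space description of diffeomorphisms modulo quasi-symplectomorphisms already established in the proof of Corollary \ref{cor1}. The key observation, spelled out in the paragraph immediately preceding the statement, is that once a generic section $H\in S_{2n+1,1}$ is put into the normal form (\ref{nfSk})--(\ref{nfrevenS2n+11}) its functional invariants decouple into two mutually independent pieces: the first $2n$ coefficients $R_0,\ldots,R_{2n-1}$, whose data assemble into a (quasi-)symplectic structure on the orbit space of $Z_f$, and the single remaining coefficient $R_{2n}(y,p,q)$, which is completely free modulo the open condition $\partial_yR_{2n}(0)\neq 0$.

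First I would study the adjoint coefficient map
\[\Psi_H\colon(y,p,q)\longmapsto\bigl(y,R_0(y,p,q),\ldots,R_{2n-1}(y,p,q)\bigr).\]
The genericity hypothesis $dR_0\wedge\cdots\wedge dR_{2n}(0)\neq 0$ together with $\partial_y R_{2n}(0)\neq 0$ and the transversality (\ref{ctr}) force $dy\wedge dR_0\wedge\cdots\wedge dR_{2n-1}(0)\neq 0$, so $\Psi_H$ is a local diffeomorphism of the orbit space. As noted between Theorem \ref{thm2} and Theorem \ref{thm3}, any residual self-equivalence of the chosen normal form is of the shape $(x,y,p,q)\mapsto(x,y,\Phi(p,q))$ with $\Phi\in\text{Symp}(dp\wedge dq)$, which lifts on the orbit space to the quasi-symplectomorphism $(y,p,q)\mapsto(y,\Phi(p,q))\in\text{QSymp}(\widehat\omega)$, and conversely every such quasi-symplectomorphism extends to a self-equivalence of $(\omega,f,H)$. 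Hence post-composition by $\text{QSymp}(\widehat\omega)$ exhausts the freedom in the first $2n$ coefficients of the normal form.

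Second, I would invoke the quotient identification from the proof of Corollary \ref{cor1},
\[\frac{\text{Diff}(\R^{2n+1})}{\text{QSymp}(\widehat\omega)}\cong\Omega^2_{QS}(\R^{2n+1}),\]
so that the equivalence class $[\Psi_H]$ is faithfully recorded by the pullback quasi-symplectic structure $\Psi_H^{\ast}(\widehat\omega)$; concretely it is encoded by the ideal-valued coefficients $\{r_{2m+1}\in\I_{2m+1}\}_{m=0}^{n-1}$ and $\{\widetilde r_{2m}\in\I_{2m}\}_{m=1}^{n-1}$ produced by Theorem \ref{thm4}. This supplies the first (quasi-)symplectic factor of the asserted product. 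The second factor is furnished by $R_{2n}$ alone: once $[\Psi_H]$ is fixed, Theorem \ref{thm4} guarantees that $R_{2n}$ is a complete independent functional modulus, varying freely over the open subset of $C^{\infty}(\R^{2n+1})$ cut out by $\partial_yR_{2n}(0)\neq 0$. Matching these two outputs with the statement delivers the isomorphism $\mathcal{M}_{2n+1,1}\cong\Omega^2_{QS}(\R^{2n})\times C^{\infty}(\R^{2n+1})$.

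The main obstacle I foresee is the rigidity step ensuring that the $R_{2n}$-factor is genuinely independent of the first factor: after the coefficients $R_0,\ldots,R_{2n-1}$ have been rigidified into (\ref{nfroddS2n+11})--(\ref{nfrevenS2n+11}), one must verify that the stabiliser of this data inside the full equivalence group acts trivially on $R_{2n}$. This is the exact analogue of the rigidity argument in Corollary \ref{cor1} and should follow by tracing the chain-rule identities for a residual quasi-symplectomorphism: the uniqueness part of the symplectic classification in Section \ref{sec3} (Theorem \ref{thm-wk}) forces the $\text{QSymp}(\widehat\omega)$-stabiliser of $\Psi_H$ to be trivial on generic sections, so no nontrivial self-equivalence can touch $R_{2n}$ without destroying the prescribed normal form. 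Verifying this chain-rule computation is the only non-routine ingredient beyond what is already in Theorems \ref{thm2}--\ref{thm4} and Corollary \ref{cor1}.
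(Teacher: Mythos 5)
Your proposal is correct and follows essentially the same route as the paper: the adjoint map $(y,p,q)\mapsto(y,R_0,\ldots,R_{2n-1})$ is a diffeomorphism of the quasi-symplectic space $(\R^{2n+1},dp\wedge dq)$, its class modulo $\text{QSymp}(\widehat\omega)$ is identified with a quasi-symplectic structure via the quotient from Corollary \ref{cor1}, and $R_{2n}$ contributes one further free function of $2n+1$ variables (your extra ``rigidity'' discussion is just the completeness clause of Theorem \ref{thm4} made explicit, and the equivalences act by precomposition on the source rather than postcomposition). Note only that your argument, like the paper's own, actually produces the factor $\Omega^2_{QS}(\R^{2n+1})$ rather than the $\Omega^2_{QS}(\R^{2n})$ printed in the statement, which appears to be a typo in the corollary.
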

 
\subsubsection{The Series $S_{1,l}$, $l\geq 2$}
Below we consider the single index series $S_{1,l}$, $l\geq 2$. 

\medskip 

\noindent NOTATION: We denote throughout the rest of the paper by $r(\widehat{\cdot})$ any function $r$ not depending on the variables under the ``hat'' symbol.

\medskip 

\begin{theorem}
\label{thm5}
In the space of $(\max\{l,2n\})$-jets of sections $H\in S_{1,l}$, $2\leq l\leq 2n+1$, of the Hamiltonian system $(\omega,f)=(\ref{nfHS})$ there exists an open set $U$, such that any section $H$ with $j^{\max\{l,2n\}}H\in U$ is equivalent to the normal form (\ref{nfSk}) of Theorem \ref{thm2} for $k=1$, where the function $R_0(y,p,q)$ is of the form:
\begin{equation}
\label{nfR0S1l}
R_0(y,p,q)=g(y)+p_1+\sum_{j=1}^{2n-1}r_{j}(p,q)y^j+\phi_0(y,p,q)y^{2n},
\end{equation}
where: 
\begin{eqnarray}
\label{cnfR0S1l}
g'(0)=0, \quad g''(0)\ne 0,
\end{eqnarray}
and:
\begin{equation}
\label{nfr1S1l}
r_1(p,q)=\psi(p,q)q_1^{l-1}+\sum_{j=0}^{l-3}r_{1,j}(\widehat{q}_1)q_1^{j},
\end{equation}
where $\psi(0)\ne 0$, $r_{1,j}(0)=0$, $j=0,\cdots,l-3$, and the functions $r_{1,j}$ are differentially independent at the origin:
\begin{equation}
\label{dir1j}
dr_{1,0}\wedge \cdots \wedge dr_{1,l-2}(0)\ne 0,
\end{equation}
whereas the rest of the functions $r_i$, $i=2,\cdots,n-1$ are of the same form as in Theorems \ref{thm3}-\ref{thm4}:
\begin{equation}
\label{nfroddS1l}
r_{2m+1}\in \I_{2m+1}, \quad \partial_{q_{m+1}}r_{2m+1}(0)\ne 0, \quad m=1,\cdots,n-1,
\end{equation}
\begin{equation}
\label{nfrevenS1l}
r_{2m}(p,q)=p_{m+1}+\widetilde{r}_{2m}(p,q), \quad \widetilde{r}_{2m}\in \I_{2m}, \quad m=1,\cdots,n-1,
\end{equation}
The set of functions:
\begin{equation}
\label{m1l}
\text{m}_{1,l}(H)=\big \{\phi_0,g,\psi,\{r_{1,j}\}_{j=0}^{l-3},\{r_{2m+1}\in \I_{2m+1}\}_{m=1}^{n-1},\{\widetilde{r}_{2m}\in \I_{2m}\}_{m=1}^{n-1} \big \}
\end{equation}
is a complete set of functional moduli for the classification of generic sections $H\in S_{1,l}$: two sections $H$, $H'$ of $(\omega,f)=(\ref{nfHS})$ are equivalent if and only if $\text{m}_{1,l}(H')\equiv \text{m}_{1,l}(H')$.
\end{theorem}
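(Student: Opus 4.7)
The plan is to apply Theorem \ref{thm2} with $k=1$ to obtain the preliminary normal form $H=\{x^2+R_0(y,p,q)=0\}$, and then to classify $R_0$ under the residual symmetry group (symplectomorphisms of $\omega$ preserving $f=y$ and the singular locus $C_1=\{x=0\}$, together with multiplication of $h$ by a unit). Expanding along the fiber direction, $R_0(y,p,q)=\sum_{j\geq 0}r_j(p,q)y^j$, the transversality $dy\wedge dR_0(0)\neq 0$ gives $dr_0(0)\neq 0$, while the $S_{1,l}$ conditions (\ref{cS1lR'}) translate into the statement that $r_1=\partial_yR_0|_{y=0}$ has $(l-1)$-th order Whitney tangency with the Hamiltonian flow of $r_0$ on the reduced symplectic space $(\R^{2n}_{(p,q)},dp\wedge dq)$. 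The constant $g(0)$ is absorbed into the function $g(y):=R_0(y,0,0)$, and using symplectomorphisms of $(p,q)$ that form the $y=0$ slice of our group, Darboux's theorem brings $r_0$ to $r_0=g(0)+p_1$. After this step the Hamiltonian field of $r_0$ is $\partial_{q_1}$, and the tangency condition reads $\partial_{q_1}^ir_1(0)=0$ for $i\leq l-2$, $\partial_{q_1}^{l-1}r_1(0)\neq 0$.

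Next I would apply Malgrange/Weierstrass preparation in the variable $q_1$ to write $r_1=\psi(p,q)q_1^{l-1}+\sum_{j=0}^{l-2}\widetilde{r}_{1,j}(\widehat{q}_1)q_1^j$ with $\psi(0)\neq 0$, and then use the Hamiltonian flow of a function $F(p,\widehat{q})$ (which Poisson-commutes with $r_0=p_1$ and so preserves its fixed form) to kill the $q_1^{l-2}$ coefficient, yielding the form (\ref{nfr1S1l}). The differential independence condition (\ref{dir1j}) is the defining open generic condition on the $\max\{l,2n\}$-jet that cuts out the set $U$. For the remaining coefficients $r_j(p,q)$, $j\geq 2$, I would invoke Theorem \ref{thm-wk} of Section \ref{sec3}: within the group of symplectomorphisms preserving the already-fixed forms of $r_0$ and $r_1$, the map $r=(p_1,r_1,r_2,\cdots,r_{2n-1})$ defines (generically) a diffeomorphism of $\R^{2n}$, and the symplectic classification of such diffeomorphisms produces the canonical representatives (\ref{nfroddS1l})--(\ref{nfrevenS1l}) in the nested Darboux ideals $\I_{2m+1}$ and $\I_{2m}$, with genericity $\partial_{q_{m+1}}r_{2m+1}(0)\neq 0$.

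The tail $\sum_{j\geq 2n}r_j(p,q)y^j$ is absorbed into a single functional invariant $\phi_0(y,p,q)y^{2n}$ by iteratively solving homological equations for Hamiltonians of the form $y^{2n}F(y,p,q)$, exactly as in the $S_{1,1}$ analysis of \cite{K1}: once the ``symplectic'' coefficients along $(p,q)$ have been exhausted up to order $y^{2n-1}$, the higher-order tail contributes no further moduli beyond a single smooth function. The unit factor in $h=u(x^2+R_0)$ is consumed in normalising the leading $x^2$-coefficient to $1$ and in fixing the shape $g(y)+p_1+\cdots$ on the $\{p=q=0\}$-axis; the conditions (\ref{cnfR0S1l}) on $g$ follow from $\partial_yR_0(0)=0$ (giving $g'(0)=0$) and from the transversal-type genericity $dh\wedge(dp\wedge dq)^n(0)\neq 0$ discussed in Remark \ref{rem3} (giving $g''(0)\neq 0$, consistent with the $l=2$ case of (\ref{cS1lR'}) when $l=2$). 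Completeness of the moduli list $\text{m}_{1,l}(H)$ is then obtained by showing that any equivalence between two sections already in normal form must fix all the listed functions pointwise, since each additional piece of data has been normalised against an independent direction in the symmetry group.

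The main technical obstacle will be Step 4: carrying out the symplectic normalisation of the $r_j$ with $j\geq 2$ under the narrower residual group that must \emph{simultaneously} preserve $r_0=p_1$ and the fixed Malgrange-prepared form of $r_1$. This requires a careful adaptation of the homological-equation arguments underlying Theorem \ref{thm-wk}, working in families over $y$ and taking into account the tangency constraints imposed by $r_1$; once this is set up, the proof reduces to the routine cohomological calculations already present in Section \ref{sec3}.
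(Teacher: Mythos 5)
Your overall architecture coincides with the paper's: apply Theorem \ref{thm2} with $k=1$, Taylor-expand $R_0$ along $y$ up to the remainder $\phi_0 y^{2n}$, observe that conditions (\ref{ctr'}) and (\ref{cS1lR'}) say exactly that the pair $(r_0,r_1)$ of leading coefficients forms an $S_{l-2}$-singularity on the reduced space $(\R^{2n}_{(p,q)},dp\wedge dq)$, and then classify the coefficient map $r=(r_0,\dots,r_{2n-1})$ symplectically. The paper's entire proof at that point is a single invocation of Theorem \ref{thm-wk} with $s=l-2$, the open set $U$ being defined by \emph{all} of conditions (a)--(c) of Definition \ref{defrSs} for the map $r$ -- not only by (\ref{dir1j}), and not by $r$ being a diffeomorphism. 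Your assertion that $r=(p_1,r_1,r_2,\dots,r_{2n-1})$ ``defines (generically) a diffeomorphism'' is the $l=2$ (i.e.\ $s=0$) case only; for $l\geq 3$ the relevant genericity is membership in $S_{l-2}(\widetilde{\omega})$, where the decisive hypothesis is condition (c), the independence of $r_0$, $\{r_0,r_1\}_{l-3}$, $r_2,\dots,r_{2n-1}$ (note $r_1$ is replaced by its iterated bracket).

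The genuine gap is in your Step 4 and in the order of normalisation. You propose to bring $r_1$ fully to the form (\ref{nfr1S1l}) first (Malgrange preparation plus a Hamiltonian shift killing the $q_1^{l-2}$ coefficient) and \emph{then} to normalise $r_2,\dots,r_{2n-1}$ ``within the group of symplectomorphisms preserving the already-fixed forms of $r_0$ and $r_1$.'' That residual group is far too small: the stabiliser of the pair $(p_1,r_1)$ for a generic $r_1$ cannot carry $2n-2$ arbitrary functions into the nested-ideal form (\ref{nfroddS1l})--(\ref{nfrevenS1l}), which requires, at the first stage alone, an arbitrary symplectomorphism of the reduced space $\{p_1=q_1=0\}$. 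You correctly sense the obstruction, but the remedy is not an adapted homological-equation argument: it is to normalise only the $(l-2)$-singular locus $\Sigma_{l-2}(r_1)=\{\partial_{q_1}^{l-2}r_1=0\}$ to $\{q_1=0\}$ (a non-singular section of the pair $(dp\wedge dq,\,p_1)$, so Theorem \ref{thm1} applies), then use the much larger group preserving $(\widetilde{\omega},p_1,\I_1=\langle q_1\rangle)$ to normalise the pairs $(r_2,r_3),(r_4,r_5),\dots$ by successive restriction to the symplectic subspaces $p_1=q_1=\dots=p_i=q_i=0$, and only at the end read off $\psi$ and the $r_{1,j}$ from $r_1$ as functional moduli. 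This reordering is precisely the content of the proof of Theorem \ref{thm-wk} for general $s$, so no adaptation of it is needed -- it should simply be invoked with $s=l-2$. The remaining items in your write-up ($g'(0)=0$ from $\partial_yR_0(0)=0$, $g''(0)\neq 0$ from the transversal-type genericity of Remark \ref{rem5}, the role of $\phi_0$ as a modulus) are consistent with the paper, although $\phi_0 y^{2n}$ is just the Taylor remainder and requires no normalisation to be ``absorbed.''
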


\begin{remark}
\label{rem5}
\normalfont{The condition (\ref{cnfR0S1l}) that the function of $1$-variable $g(y)$ has a non-degenerate critical point at the origin (is a Morse function) is obtained by definition $\{h,\{f,h\}(0)=0$, and the generic assumption that the transversal type $H\cap \{p=q=0\}$ of $H$ (defined as in Remark \ref{rem3}) is a singular plane curve with quadratic singularity (an $A_1$-singularity) at the origin. }
\end{remark}

As it will become apparent in the course of the proof of the theorem, the open set $U$ in the space of jets of sections $H\in S_{1,l}$ appearing in the statement, is defined by the condition that the associated mapping $r(p,q)=(p_1,r_1(p,q),\cdots,r_{2n-1}(p,q))$, obtained by the first $2n$ coefficients of the Taylor expansion of the function $R_0(y,p,q)$ along $f=y$, given by (\ref{nfR0S1l}), is a diffeomorphism for $l=2$, and defines a mapping with certain singularities of ``Whitney-type" (see Section \ref{sec3}) in the reduced symplectic space $(\R^{2n},dp\wedge dq)$. It follows from this (Theorem \ref{thm-wk} again) that the expressions (\ref{nfr1S1l})-(\ref{nfrevenS1l}) provide the exact normal form for the coefficient mapping $r(p,q)$, under symplectomorphisms of the Darboux normal form $\widetilde{\omega}=dp\wedge dq$. From this it follows: 

\begin{corollary}
\label{cor3}
The moduli space $\mathcal{M}_{1,l}(U)$ of generic sections $H\in S_{1,l}$, $l\geq 2$, is isomorphic to the product:
\[\mathcal{M}_{1,l}(U)\cong \Omega^2_S(\R^{2n})\times C^{\infty}(\R^{2n-1})^{l-2}\times \text{M}(\R)\times C^{\infty}(\R^{2n+1}), \]
where $\text{M}(\R)$ is the set of Morse functions in one variable. 
\end{corollary}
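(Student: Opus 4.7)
My plan is to read the claimed isomorphism directly off the complete invariant set $\text{m}_{1,l}(H)$ produced by Theorem \ref{thm5}, splitting it into four blocks each of which I identify with one factor of the right-hand product.

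By Theorem \ref{thm5}, two sections in $S_{1,l}$ with jets in $U$ are equivalent if and only if their invariant tuples
\[
\big(\phi_0,\; g,\; \psi,\; \{r_{1,j}\}_{j=0}^{l-3},\; \{r_{2m+1}\}_{m=1}^{n-1},\; \{\widetilde r_{2m}\}_{m=1}^{n-1}\big)
\]
coincide, so $\mathcal{M}_{1,l}(U)$ is in canonical bijection with the space of admissible tuples of this form. Three of the four factors then come almost for free: $\phi_0\in C^{\infty}(\R^{2n+1})$ is arbitrary and contributes the last factor; $g$ is, by condition (\ref{cnfR0S1l}), an arbitrary Morse function of one variable and contributes $\text{M}(\R)$; and the $l-2$ functions $r_{1,j}$ depend only on $\widehat{q}_1\in\R^{2n-1}$ by (\ref{nfr1S1l}), contributing the factor $C^{\infty}(\R^{2n-1})^{l-2}$.

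The main work lies in identifying the remaining block $\big(\psi,\{r_{2m+1}\}_{m=1}^{n-1},\{\widetilde{r}_{2m}\}_{m=1}^{n-1}\big)$ with $\Omega^2_S(\R^{2n})$. I would run the same bookkeeping argument as in the proof of Corollary \ref{cor1}, which identifies $\Omega^2_S(\R^{2n})\cong\text{Diff}(\R^{2n})/\text{Symp}(\widetilde{\omega})$ with $n(2n-1)$ free scalar functions on $\R^{2n}$ via the ideal decomposition $\I_1,\I_2,\ldots,\I_{2n-1}$. In the present setting the unit $\psi$, appearing as the leading coefficient of $q_1^{l-1}$ in $r_1$, plays the role of the single free generator of $\I_1=\langle q_1\rangle$, while the $n-1$ functions $r_{2m+1}\in\I_{2m+1}$ and the $n-1$ functions $\widetilde{r}_{2m}\in\I_{2m}$ expand into $\sum_{m=1}^{n-1}(2m+1)+\sum_{m=1}^{n-1}(2m)=n(2n-1)-1$ further generators. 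The total is exactly $n(2n-1)$, so the homogeneous-space identification of Corollary \ref{cor1} applies verbatim.

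The main obstacle I anticipate is justifying why the extra transverse moduli $\{r_{1,j}\}_{j=0}^{l-3}$ enter as an independent factor and do not interact with the $\I_1$-slot of $\Omega^2_S(\R^{2n})$. In the $S_{1,1}$ case, $r_1$ lies entirely in $\I_1$ and is absorbed into the symplectic structure; in the $S_{1,l\geq 2}$ case, however, the coefficient mapping $r=(p_1,r_1,\ldots,r_{2n-1})$ acquires a genuine Whitney-type singularity of corank one along $\{q_1=0\}$. For this separation I would appeal to Theorem \ref{thm-wk} of Section \ref{sec3}, whose exact-normal-form content ensures precisely that the transverse coefficients $r_{1,j}(\widehat q_1)$ are symplectically independent of the unit $\psi$ parametrizing $\I_1$, so that no collapse between the first two factors of the claimed product occurs.
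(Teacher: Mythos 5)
Your proposal is correct and follows essentially the same route as the paper: the paper's proof likewise reads the factors off the complete invariant set of Theorem \ref{thm5}, starting from the $l=2$ case (where the coefficient map is a diffeomorphism, so Corollary \ref{cor1}'s identification of the $\Omega^2_S(\R^{2n})$ block applies) and observing that each further $l$ adds the $(l-2)$ functions $r_{1,j}(\widehat{q}_1)$ of $2n-1$ variables. Your explicit generator count $1+\bigl(n(2n-1)-1\bigr)=n(2n-1)$ for the block $\bigl(\psi,\{r_{2m+1}\},\{\widetilde{r}_{2m}\}\bigr)$ and the appeal to Theorem \ref{thm-wk} (equivalently Corollary \ref{corwk}) for the independence of the transverse moduli are exactly the content the paper leaves implicit.
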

\begin{proof}
Starting with $l=2$, where the map $r(p,q)=(p_1,r_1(p,q),\cdots,r_{2n-1}(p,q))$ defines a diffeomorphism in symplectic space $(\R^{2n},dp\wedge dq)$, we see from Theorem \ref{thm5} that the functional moduli grow with $l\geq 2$ by adding $(l-2)$-functions of $(2n-1)$-variables (the functions $r_{1j}(\widehat{q}_1)$ in (\ref{nfr1S1l})).  
\end{proof}

\begin{remark}
\normalfont{As we will see in Section \ref{sec3} below, the $(l-2)$-functional invariants of $(2n-1)$-variables $\{r_{1j}(\widehat{q}_1)\}_{j=0}^{l-3}$, are already invariants for the classification of ordinary Whitney mappings $r\in S_{l-2}$, (c.f. \cite{A11}, \cite{GG}) under ordinary $\mathcal{R}$-equivalence, i.e (non-symplectic) diffeomorpshisms of the source $\R^{2n}$ only (no changes of coordinates in the target are allowed, see Remark \ref{rem2Ss} in Section \ref{sec3}.)} 
\end{remark}

\subsubsection{The Series $S_{k,l}$, $k\geq2$, $l\geq 2$}
Below we consider all remaining cases of typical singularities of sections $S_{k,l}$, $k\geq 2$, $l\geq 2$.  The statement of the theorem is slightly more complicated, but its proof is the same with the previous ones:

\begin{theorem}
\label{thm6}
In the space of $(\max\{k,l+1,2n-k+1\})$-jets of sections $H\in S_{k,l}$, $1\leq k+l-1\leq 2n+1$, $2\leq k\leq 2n$, $2\leq l\leq 2n$, of the Hamiltonian system $(\omega,f)=(\ref{nfHS})$ there exists an open set $U$, such that any section $H$ with $j^{\max\{k,l+1,2n-k+1\}}H\in U$ is equivalent to the normal form (\ref{nfSk}) of Theorem \ref{thm2}, where the functions $R_i(y,p,q)$, $i=0,\cdots,k-1$, are again of the same form of Theorem \ref{thm3}:
\begin{equation}
\label{nfRiSkl}
R_i(y,p,q)=r_{i}(p,q)+\phi_i(y,p,q)y, \quad i=1,\cdots, k-1,
\end{equation}
\begin{equation}
\label{nfR0Skl}
R_0(y,p,q)=g(y)+p_1+\sum_{j=1}^{2n-k}r_{k-1+j}(p,q)y^j+\phi_0(y,p,q)y^{2n-k+1},
\end{equation}
but now: 
\begin{eqnarray}
\label{cnfR0Skl}
g'(0)\ne 0,
\end{eqnarray}
and:
\begin{equation}
\label{nfr1Skl}
r_1(p,q)=\psi(p,q)q_1^{l}+\sum_{j=0}^{l-2}r_{1,j}(\widehat{q}_1)q_1^{j},
\end{equation}
where $\psi(0)\ne 0$, $r_{1,j}(0)=0$, $j=0,\cdots,l-2$, and the functions $r_{1,j}$ are again differentially independent at the origin:
\begin{equation}
\label{dir1j}
dr_{1,0}\wedge \cdots \wedge dr_{1,s-1}(0)\ne 0,
\end{equation}
whereas the rest of the functions $r_i$, $i=2,\cdots,n-1$ are of the same form as in Theorems \ref{thm3}-\ref{thm5}:
\begin{equation}
\label{nfroddSkl}
r_{2m+1}\in \I_{2m+1}, \quad \partial_{q_{m+1}}r_{2m+1}(0)\ne 0, \quad m=1,\cdots,n-1,
\end{equation}
\begin{equation}
\label{nfrevenSkl}
r_{2m}(p,q)=p_{m+1}+\widetilde{r}_{2m}(p,q), \quad \widetilde{r}_{2m}\in \I_{2m}, \quad m=1,\cdots,n-1,
\end{equation}
The set of functions:
\begin{equation}
\label{mkl}
\text{m}_{k,l}(H)=\big \{\{\phi_i\}_{i=0}^{k-1},g,\psi,\{r_{1,j}\}_{j=0}^{l-3},\{r_{2m+1}\in \I_{2m+1}\}_{m=1}^{n-1},\{\widetilde{r}_{2m}\in \I_{2m}\}_{m=1}^{n-1} \big \}
\end{equation}
is a complete set of functional moduli for the classification of generic sections $H\in S_{k,l}$: two sections $H$, $H'$ of $(\omega,f)=(\ref{nfHS})$ are equivalent if and only if $\text{m}_{k,l}(H')\equiv \text{m}_{k,l}(H')$.
\end{theorem}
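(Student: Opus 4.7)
The plan is to combine the preliminary normal form of Theorem \ref{thm2} with the symplectic classification of Whitney-type mappings provided by Theorem \ref{thm-wk}, following the same strategy as in the proofs of Theorems \ref{thm3} and \ref{thm5}. First, Theorem \ref{thm2} reduces any $H \in S_{k,l}$ to the form (\ref{nfSk}) with coefficient functions $R_0, \ldots, R_{k-1}$ satisfying the transversality (\ref{ctr'}) and the tangency conditions (\ref{cSklR'}). The residual equivalence group is the subgroup of quasi-symplectomorphisms $(x, y, p, q) \mapsto (x, y, \Phi(p, q))$ with $\Phi^*(dp \wedge dq) = dp \wedge dq$, together with unit multiples of the defining function.

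Next, I would expand each $R_i$ in powers of $y$ and assemble the first $2n$ Taylor coefficients of $R_0, R_1, \ldots, R_{k-1}$ into a single mapping $r : (\mathbb{R}^{2n}, 0) \to \mathbb{R}^{2n}$ on the reduced phase space, exactly as in the proof of Theorem \ref{thm3}: the components $r_i := R_i|_{y=0}$ for $i = 0, \ldots, k-1$, together with the $y^j$-coefficients of $R_0$ for $j = 1, \ldots, 2n-k$, appearing in (\ref{nfR0Skl}). The transversality (\ref{ctr'}) allows a preliminary symplectic change of coordinates in $(p, q)$ so that $r_0 = p_1$. The iterated Poisson bracket conditions (\ref{cSklR'}) then translate, at the origin, into
\[\partial_{q_1}^i r_1(0) = 0, \quad i = 1, \ldots, l-1, \qquad \partial_{q_1}^l r_1(0) \ne 0,\]
so that $r_1$ has exactly $l$-fold Whitney tangency with the fibres of $r_0 = p_1$. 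This identifies the singularity class of $r$ with the one for which Theorem \ref{thm-wk} provides a symplectic normal form, and the open set $U$ is the set of jets for which $r$ satisfies the non-degeneracy hypotheses of that theorem (including (\ref{dir1j}) and the conditions in (\ref{nfroddSkl})).

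Applying Theorem \ref{thm-wk} to $r$ in $(\mathbb{R}^{2n}, dp \wedge dq)$ then yields a symplectomorphism $\Phi$ of the Darboux form $\widetilde{\omega} = dp \wedge dq$ bringing $r$ to the normal form prescribed by (\ref{nfr1Skl}) and (\ref{nfroddSkl})--(\ref{nfrevenSkl}). Lifting $\Phi$ to the quasi-symplectomorphism $(y, p, q) \mapsto (y, \Phi(p, q))$ normalises the lowest $y$-layer of every $R_i$ simultaneously. The subgroup of the residual equivalence group fixing these lowest layers acts on the higher $y$-tails, and can be used in a step-by-step homological manner (as in the detailed proofs of Theorems \ref{thm3} and \ref{thm5}, cf. \cite{K1}) to produce the decompositions (\ref{nfRiSkl})--(\ref{nfR0Skl}); the functions $g$, $\phi_i$ and $\widetilde{r}_{2m}$ which remain are precisely the functional moduli listed in (\ref{mkl}). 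The condition $g'(0) \ne 0$ is the natural genericity condition on the smoothness of the transversal type $H \cap \{p = q = 0\}$, which holds automatically for $S_{k,l}$ with $k \geq 2$ (in contrast with the Morse condition $g'(0) = 0$, $g''(0) \ne 0$ forced in Theorem \ref{thm5}).

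Completeness of the moduli set is verified by the standard orbit argument: any residual automorphism preserving every listed function must be the identity, so two sections with identical invariants are equivalent. The main obstacle is the careful bookkeeping that matches (\ref{cSklR'}) to the exact Whitney-type hypotheses of Theorem \ref{thm-wk} and keeps track of the ``hatted'' variables in the functional moduli, together with the verification that the residual group acts freely enough on the higher $y$-layers to achieve (\ref{nfRiSkl})--(\ref{nfR0Skl}); both are direct adaptations of the analogous steps carried out for Theorems \ref{thm3} and \ref{thm5}, so in practice the proof occupies only a few lines beyond the invocation of Theorems \ref{thm2} and \ref{thm-wk}.
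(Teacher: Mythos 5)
Your proposal follows essentially the same route as the paper: expand the $R_i$ in powers of $y$, assemble the first $2n$ coefficients into a map $r$ on the reduced phase space which the conditions (\ref{ctr'}) and (\ref{cSklR'}) place in the Whitney-type class $S_{l-1}(\widetilde{\omega})$ of Definition \ref{defrSs}, take the open set $U$ to be defined by the remaining non-degeneracy hypotheses of that definition, and apply Theorem \ref{thm-wk}. The only minor slip is your remark that $g'(0)\ne 0$ ``holds automatically'' for $k\geq 2$; as in Remark \ref{rem3} it is not implied by the $S_{k,l}$ conditions but is a generic assumption on the transversal type $H\cap\{p=q=0\}$.
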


\begin{remark}
\normalfont{Condition (\ref{cnfR0Skl}) is related again to the transversal type $H\cap \{p=q=0\}$ of $H$ being a smooth curve, exactly as in Remark \ref{rem3}}
\end{remark}

The open set $U$ in the space of jets of sections $H\in S_{k,l}$ appearing in the statement of the theorem, is defined again by the same condition as in the $S_{1,l}$ case, i.e.  that the associated mapping $r(p,q)=(p_1,r_1(p,q),\cdots,r_{2n-1}(p,q))$, obtained by the first $2n$ coefficients of the Taylor expansions along $f=y$ of the $k$ functions $R_i(y,p,q)$, $i=0,\cdots, k-1$, given by (\ref{nfRiSkl})-(\ref{nfR0Skl}), defines again a ``Whitney-type" mapping in the reduced symplectic space $(\R^{2n},dp\wedge dq)$, given by the exact normal form (\ref{nfr1Skl})-(\ref{nfrevenSkl}) under symplectomorphisms of $\widetilde{\omega}=dp\wedge dq$. From this it follows: 

\begin{corollary}
\label{cor4}
The moduli space $\mathcal{M}_{k,l}(U)$ of generic sections $H\in S_{k,l}$, $k\geq 2$, $l\geq 2$, is isomorphic to the product:
\[\mathcal{M}_{k,l}(U)\cong \Omega^2_S(\R^{2n})\times C^{\infty}(\R^{2n-1})^{l-1}\times \text{Diff}(\R)\times C^{\infty}(\R^{2n+1})^k.\]
\end{corollary}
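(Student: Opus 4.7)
The strategy is a direct analogue of the proofs of Corollaries \ref{cor1} and \ref{cor3}. First I invoke Theorem \ref{thm6}, which supplies a complete set of functional moduli $\text{m}_{k,l}(H)$ for generic sections $H \in S_{k,l}$ in the open set $U$. Completeness means that $\mathcal{M}_{k,l}(U)$ is in bijection with the admissible choices of the functions listed in $\text{m}_{k,l}(H)$ from (\ref{mkl}), so the corollary reduces to sorting those functions into four groups, one for each factor of the claimed product.

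The sorting proceeds as follows. The $k$ germs $\{\phi_i(y,p,q)\}_{i=0}^{k-1}$ are arbitrary smooth functions of $(2n+1)$ variables, contributing the factor $C^{\infty}(\R^{2n+1})^k$. The function $g(y)$ with $g'(0)\ne 0$ is a germ of a diffeomorphism of $\R$, contributing $\text{Diff}(\R)$. The unit $\psi(p,q)$ (with $\psi(0)\ne 0$), combined with the families $\{r_{2m+1} \in \I_{2m+1}\}_{m=1}^{n-1}$ and $\{\widetilde{r}_{2m} \in \I_{2m}\}_{m=1}^{n-1}$, amounts to exactly $n(2n-1)$ free function germs on $\R^{2n}$; expanding each $r_{2m+1}$ and $\widetilde{r}_{2m}$ over the generators of its nested ideal, as in the isomorphism $\Omega^2_S(\R^{2n})\cong C^{\infty}(\R^{2n})^{n(2n-1)}$ described before Corollary \ref{cor1}, these data parametrize $\Omega^2_S(\R^{2n})$. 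Here $\psi$ plays the role that $r_1 \in \I_1$ played in the $S_{k,1}$ setting, filling the $m=0$ slot of the odd-indexed family: in the present normal form (\ref{nfr1Skl}) the leading behaviour of $r_1$ in $q_1$ is captured by $\psi q_1^l$ instead of a linear term. Finally, the germs $r_{1,j}(\hat q_1)$ appearing along the polynomial $r_1$ are $l-1$ arbitrary smooth functions of $(2n-1)$ variables, subject only to the differential independence condition at the origin, and thus contribute $C^{\infty}(\R^{2n-1})^{l-1}$.

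The only non-bookkeeping step is the symplectic identification in the third group, which is inherited from the argument preceding Corollary \ref{cor1}: once the coefficient mapping $r = (p_1, r_1, \ldots, r_{2n-1})$ is inverted, $(r^{-1})^*\widetilde{\omega}$ becomes a functional invariant symplectic form whose coefficients range freely over $\Omega^2_S(\R^{2n})$ via the ideal decomposition. In the present $S_{k, l\geq 2}$ setting $r_1$ has a Whitney-type singularity of order $l$ rather than being a diffeomorphism, but the unit $\psi$ encodes the same transverse leading-coefficient information that $r_1 \in \I_1$ did in the $S_{k,1}$ case, so the ideal-decomposition argument applies verbatim. The main anticipated obstacle, namely the possibility of hidden couplings between the four groups of moduli, is ruled out precisely by the completeness of $\text{m}_{k,l}(H)$ asserted in Theorem \ref{thm6}, whence the product decomposition follows.
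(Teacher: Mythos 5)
Your proposal is correct and takes essentially the same route as the paper: the paper's own proof starts from the $l=1$ case of Corollary \ref{cor1} and observes that the moduli grow by the $(l-1)$ functions $r_{1,j}(\widehat{q}_1)$ of $(2n-1)$ variables, which is the same bookkeeping on the complete moduli set $\text{m}_{k,l}(H)$ of Theorem \ref{thm6} that you carry out factor by factor. Your explicit accounting of $\psi$ as filling the $m=0$ slot so that the symplectic factor still counts $n(2n-1)$ free germs (consistent with Corollary \ref{corwk}), and your reading of $l-1$ functions $r_{1,j}$ from the normal form (\ref{nfr1Skl}) rather than the index range printed in (\ref{mkl}), are both the intended resolution.
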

\begin{proof}
Starting with the case $l=1$ of Corollary \ref{cor1}, we see that the moduli space $\mathcal{M}_{k,l}(U)$ grows with $l\geq 2$ by adding $(l-1)$-functions of $(2n-1)$-variables  corresponding to the moduli of Whitney mappings under $\mathcal{R}$-equivalence (the functions $r_{1j}(\widehat{q}_1)$ in (\ref{nfr1Skl})).  
\end{proof}

\subsubsection{$A_1$-Singularities: The Relative Birkhoff Normal Form}

Working exactly in the same way as in the case of $S_1$-singularities, it is easy to show the following: 
\begin{theorem}
\label{thm7}
Any section $H\in A_1$ of the Hamiltonian system $(\omega,f)=(\ref{nfHS})$ is equivalent to the normal form:
\begin{equation}
\label{nfA1}
H=\{x^2+\widetilde{\psi}(p,q)+\phi(y,p,q)y=0\},
\end{equation}
where  $\phi(0)\ne 0$, and the function $\widetilde{\psi}$ is an exact normal form of a Morse function $\psi$ with respect to the symplectic structure $dp\wedge dq$ in $\R^{2n}$.  
\end{theorem}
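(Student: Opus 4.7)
My plan is to mirror the preliminary normal form step for $S_1$-singularities embedded in Theorem \ref{thm2}, adding one final new ingredient at the end: the symplectic Morse/Birkhoff lemma. I would organize the argument in three stages.

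First, in the coordinates of (\ref{nfHS}) we have $Z_f = \partial_x$, so the conditions $\{f,h\}(0) = \partial_x h(0) = 0$ (forced by the transversality failure $df|_H(0) = 0$, equivalently $dh(0)\parallel dy(0)$) together with $\{f,\{f,h\}\}(0) = \partial_x^2 h(0)\ne 0$ from Definition \ref{def4} put us in the setting of the Malgrange--Weierstrass preparation theorem in the variable $x$. Applying it, I obtain a decomposition
\begin{equation*}
h = u(x,y,p,q)\cdot\bigl(x^2 + A_1(y,p,q)\,x + A_0(y,p,q)\bigr),
\end{equation*}
for a unit $u$ and functions $A_0,A_1$ with $A_0(0)=A_1(0)=0$.

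Next I would eliminate the linear term $A_1 x$ by exactly the same symplectic ``completing the square'' technique used in the $k=1$ case of Theorem \ref{thm2}: take the time-$1$ flow of a Hamiltonian vector field $Z_G$ whose generator $G=G(y,p,q)$ has no $x$-dependence. Such $Z_G$ automatically preserves $f=y$ (its $\partial_y$-component equals $-\partial_x G=0$), while its $\partial_x$-component $\partial_y G$ produces a shift in $x$ that is compensated symplectically by a change in $(p,q)$. A Moser-type homotopy selects $G$ so that the net effect absorbs $A_1/2$ at each order, reducing $h$ to
\begin{equation*}
h = u'(x,y,p,q)\cdot\bigl(x^2 + B(y,p,q)\bigr)
\end{equation*}
for a new unit $u'$. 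Splitting $B(y,p,q)=\psi(p,q)+y\,\phi(y,p,q)$ with $\psi(p,q):=B(0,p,q)$, the restriction $h|_{X_0}=u'(x,0,p,q)(x^2+\psi(p,q))$ has a non-degenerate critical point at the origin precisely when $\psi$ is Morse; by Remark \ref{remA1} this is equivalent to the $A_1$-condition on $f|_H$. The remaining condition $\{h,\{h,f\}\}(0)\ne 0$ forces $\phi(0)\ne 0$.

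Finally, since any map of the form $(x,y,p,q)\mapsto(x,y,\Phi(p,q))$ with $\Phi^*(dp\wedge dq)=dp\wedge dq$ trivially belongs to the isotropy subgroup of the pair $(\omega,f)=(\ref{nfHS})$, I apply such a $\Phi$ chosen to bring the Morse function $\psi$ to its exact symplectic normal form $\widetilde\psi$ by the symplectic Morse/Birkhoff lemma on $(\R^{2n},dp\wedge dq)$. Absorbing the unit $u'$ into the defining equation $\{h=0\}$ produces the announced normal form. The main technical obstacle lies in the middle stage: the naive substitution $x\mapsto x-A_1/2$ fails to be symplectic once $A_1$ depends non-trivially on $(p,q)$, so one must carefully engineer the compensating symplectic change in $(p,q)$ that still preserves $f$. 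This is precisely the point handled in the $k=1$ case of Theorem \ref{thm2}, which is why the author can obtain the result ``in exactly the same way as in the case of $S_1$-singularities''.
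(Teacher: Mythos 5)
Your outline is correct and lands on the paper's normal form, and its first and third stages coincide with what the paper actually does: Weierstrass preparation in $x$ (legitimate because $\partial_xh(0)=0$, $\partial_x^2h(0)\ne 0$), the identification of $\{h,\{h,f\}\}(0)\ne 0$ with $\phi(0)=\partial_yB(0)\ne 0$, the translation of the $A_1$-condition into $\psi=B(0,\cdot)$ being Morse via Remark \ref{remA1}, and the final symplectomorphism $(x,y,p,q)\mapsto(x,y,\Phi(p,q))$ normalizing $\psi$ to $\widetilde\psi$. The genuine divergence is your middle stage. You construct the symplectomorphism killing the term $A_1x$ directly, as the time-one flow of a Hamiltonian $G(y,p,q)$, which forces you to solve an implicit equation for $G$: the net $x$-shift $\int_0^1\partial_yG$ must equal $-\tfrac12A_1$ evaluated along the $(p,q)$-flow that $G$ itself generates, and you propose to resolve this order by order in Moser style. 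That can be made to work, but it is not ``precisely the point handled in the $k=1$ case of Theorem \ref{thm2}'': there the author performs the plain, non-symplectic shift $x\mapsto x-\tfrac12A_1(y,p,q)$, which preserves $f=y$ and $Z_f=\partial_x$ at the cost of turning $\omega$ into $dx\wedge dy+\widehat\omega$ with $\widehat\omega$ a quasi-symplectic form on the orbit space $\R^{2n+1}_{(y,p,q)}$, and then restores $\widehat\omega=dp\wedge dq$ in one stroke by the odd-dimensional Darboux theorem \cite{Z}, via a diffeomorphism $(y,p,q)\mapsto(y,\Phi(y,p,q))$ that does not disturb the shape $\{x^2+B(y,p,q)=0\}$. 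The paper's route buys an explicit, iteration-free symplectomorphism; your route keeps the symplectic form untouched throughout but pays with the implicit/homological step, which is the one place where your write-up is a sketch rather than a proof and would need to be carried out (or replaced by the shift-plus-Darboux argument above).
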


\begin{remark}
\normalfont{The theorem above reduces the problem to obtain exact normal forms for $A_1$-singularities of sections $H$, to the problem to obtain exact normal forms for Morse functions $\psi(p,q)$ in the reduced phase space $(\R^{2n},dp\wedge dq)$. This is in turn a classical subject, starting already from the works of G. D. Birkhoff \cite{B} and the so-called Birkhoff normal forms. Due to the works of several authors (c.f. more recently \cite{Zu} and references therein) it is now well known that for a generic Morse function $\psi$, reduction to its Birkhoff normal form is only possible in the formal category (unless $n=1$, as is implied for example by the isochore Morse lemma \cite{CV}, \cite{Ve} (see also \cite{F}, \cite{G}). It follows from this that for a generic section $H\in A_{1}$, reduction to its exact normal form (\ref{nfA1}) with e.g. in the elliptic case:
\[\psi=\widetilde{\psi}(p_1^2+q_1^2,\cdots,p_n^2+q_n^2),\]
is only possible in the formal category (for $n\geq 2$) as well. 	 }
\end{remark}

\section{Whitney-Type Mappings in Symplectic Space}
\label{sec3}

We consider here the $\mathcal{R}(\widetilde{\omega})$-classification of mappings $r:(\R^{2n},\widetilde{\omega})\rightarrow \R^{2n}$ having certain singularities of ``Whitney-type" (see below), by symplectomorphisms of the source space (no changes of coordinates in the target are allowed) preserving a fixed symplectic structure: 
\begin{equation}
\label{dnft}
\widetilde{\omega}=dp\wedge dq.
\end{equation}
 
\begin{definition}
\label{defrSs}
By a Whitney-type mapping $r\in S_{s}(\widetilde{\omega})$, $0\leq s\leq 2n$, in symplectic space, we mean a mapping $r:(\R^{2n},\widetilde{\omega})\rightarrow \R^{2n}$, $r=(r_0,\cdots,r_{2n-1})$ such that:
\begin{itemize}
\item[(a)] the pair of functions $(r_0,r_1)$ defines an $S_{s}$-singularity at the origin in the sense of Definitions \ref{def1}-\ref{def2}, i.e.
\[\{r_0,r_1\}(0)\ne 0,\]
for $s=0$, and:
\[\{r_0,r_1\}_i(0)=0, \quad i=0,\cdots,s-1, \quad \{r_0,r_1\}_s(0)\ne 0,\]
\[dr_0\wedge dr_1(0)\ne 0, \quad (\text{for $n\geq 2$}),\]
for $s\geq 1$,
\item[(b)] the functions $r_0$, $\{r_0,r_1\}_{i}$, $i=0,\cdots,s-1$, are differentially independent at the origin:
\[dr_0\wedge d\{r_0,r_1\}\wedge \cdots \wedge d\{r_0,r_1\}_{s-1}(0)\ne 0,\]
\item[(c)] the functions $r_0$, $\{r_0,r_1\}_{s-1}$ and $r_i$, $i=2,\cdots,2n-1$, are also differentially independent at the origin:
\[dr_0\wedge d\{r_0,r_1\}_{s-1}\wedge dr_2\wedge \cdots \wedge dr_{2n-1}(0)\ne 0.\]
\end{itemize} 
\end{definition}

\begin{remark}
\label{rem0Ss}
\normalfont{For $s=0$ we obtain diffeomorphisms, $S_0(\widetilde{\omega})=\text{Diff}(\R^{2n})$. For $n=1$ and $s=1$ we obtain ordinary folds $S_1$ (in the sense of Whitney c.f. \cite{A11}, \cite{GG}). Notice though that while ordinary cusps $S_2$ appear typically for mappings on the plane $\R^2\rightarrow \R^2$, they do not appear typically for Whitney-type mappings $(\R^2,\widetilde{\omega})\rightarrow \R^2$ as defined above (due to (a)), $S_2(\widetilde{\omega})=\emptyset$. Moreover, for any $n\geq 2$, the mappings $r\in S_{s}(\widetilde{\omega})$, $s\geq 1$, are not even Whitney mappings in the ordinary sense, unless one imposes in Definition \ref{defrSs} the following further condition:
\begin{itemize}
\item[(d)] \textit{the singular locus $\Sigma(r)=\{\det[dr]=0\}$ (the vanishing of the Jacobian determinant of $r$) is given by the hypersurface:}
\[\Sigma(r)=\{\{r_0,r_1\}=0\}.\] 
\end{itemize} 
Indeed, condition (a) then implies that the Hamiltonian vector field $Z_{r_0}:=\{r_0,\cdot\}$ of the function $r_0$ spans the $1$-dimensional kernel field of the differential of the map $r$ over its singular locus $\Sigma(r)$:
\[\ker dr=\text{span}\{r_0,\cdot\},\]
and has exactly $(s-1)$-order of tangency with it at the origin. By condition (b) we obtain then that the mapping $r\in S_{s}(\widetilde{\omega})$ is indeed a Whitney map in the ordinary sense, i.e. $r\in S_s$ (c.f. again \cite{A11}, \cite{GG}). Notice thought that the singularity class $S_{2n}$ is generic for ordinary Whitney mappings whereas, due to (a), it does not appear typically for Whitney-type mappings in symplectic space $S_{2n+1}(\widetilde{\omega})=\emptyset$.}
\end{remark}

\begin{remark}
\label{rem2Ss}
\normalfont{Notice also that conditions (a)-(c) define an open set in the space $s$-jets of all mappings $(\R^{2n},\widetilde{\omega})\rightarrow \R^{2n}$ satisfying conditions:
\[\{r_0,r_1\}_i(0)=0, \quad i=0,\cdots,s-1, \quad \{r_0,r_1\}_s(0)\ne 0,\] 
whereas condition (d) of being Whitney in the ordinary sense, is non-generic in the space of all mappings satisfying (a)-(c). }
\end{remark}

The $\mathcal{R}(\widetilde{\omega})$-classification of mappings $r\in S_{s}(\widetilde{\omega})$ with respect to a fixed symplectic structure $\widetilde{\omega}$ in the source, is naturally a sub-problem of the $\mathcal{R}$-classification of mappings $r\in S_s(\widetilde{\omega})$, i.e. under ordinary (non-symplectic) diffeomorphisms of the source space (recall that no changes of coordinates in the target are allowed), a problem which already contains functional moduli for all $s\geq 1$ (for $s=0$ it is trivial that any diffeomorphism $r$ is $\mathcal{R}$-equivalent to the identity). Indeed: 
\begin{proposition}
\label{proprSs}
Any mapping $r\in S_s(\widetilde{\omega})$, $1\leq s\leq 2n$, is $\mathcal{R}$-equivalent to the normal form:
\begin{equation}
\label{nfRSs}
r(p,q)=(p_1,q_1^{s+1}+\sum_{j=0}^{s-1}r_{1,j}(\widehat{q}_1)q_1^j,p_2+\widetilde{r}_{2}(p,q),\cdots,q_n+\widetilde{r}_{2n-1}(p,q)),
\end{equation}
where the functions $\widetilde{r}_i(p,q)$ belong in the ideal $\I_1:=<q_1>$, $\widetilde{r}_i\in \I_1$, $i=2,\cdots,2n-1$, and the functions $r_{1,j}(\widehat{q}_1)$, $r_{1,j}(0)=0$, $j=0,\cdots, s-1$, are differentially independent at the origin:
\begin{equation}
\label{dir1j}
dr_{1,0}\wedge \cdots \wedge dr_{1,s-1}(0)\ne 0.
\end{equation}
 The functions $\{r_i\in \mathcal{I}_1\}_{i=2}^{2n-2},\{r_{1,j}\}_{j=0}^{s-1}$, are functional moduli for mappings $r\in S_s(\widetilde{\omega})$ under $\mathcal{R}$-equivalence. 
\end{proposition}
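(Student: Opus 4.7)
The strategy is to reach the normal form (\ref{nfRSs}) in two stages: first introduce source coordinates adapted to the Hamiltonian flow of $r_0$ to fix the shape of all components but $r_1$, then apply the Malgrange preparation theorem to bring $r_1$ to polynomial form in the remaining free coordinate $q_1$.

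For the coordinate stage, pick any local hypersurface $S\ni 0$ in the source transverse to $Z_{r_0}$ and introduce new coordinates $(p_1,q_1,p_2,q_2,\ldots,p_n,q_n)$ by declaring $p_1:=r_0$, letting $q_1$ be the time parameter of the flow of $Z_{r_0}$ based on $S$ (so $\{q_1=0\}=S$ and $\partial_{q_1}=Z_{r_0}$), and taking $p_{m+1},q_{m+1}$ for $m\ge 1$ to be any smooth extensions off $S$ of the restrictions $r_{2m}|_S, r_{2m+1}|_S$. Condition (c) of Definition \ref{defrSs} forces $r_0|_S,r_2|_S,\ldots,r_{2n-1}|_S$ to be a system of $2n-1$ independent functions on the $(2n-1)$-dimensional $S$, so the above is a valid local coordinate system. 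In these coordinates $r_0=p_1$ identically, and $r_{2m}-p_{m+1}$, $r_{2m+1}-q_{m+1}$ vanish on $S=\{q_1=0\}$, hence lie in $\I_1=\langle q_1\rangle$ as required for the $\widetilde{r}_i$-part of the normal form. Moreover the identity $\partial_{q_1}=Z_{r_0}$ gives $\partial_{q_1}^{k}r_1=\{r_0,r_1\}_{k-1}$ for $k\ge 1$, so condition (a) forces $r_1$ to be $q_1$-regular of order $s+1$ at the origin.

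The Malgrange preparation theorem applied to $r_1$ in the $q_1$-variable then yields
\[
r_1 = u(p,q)\,\Bigl[\,q_1^{s+1}+\sum_{j=0}^{s} a_j(p,\widehat{q}_1)\,q_1^{j}\,\Bigr],\qquad u(0)\ne 0,\quad a_j(0)=0.
\]
A Tschirnhaus translation $q_1\mapsto q_1-a_s/(s+1)$ kills the $q_1^s$ coefficient, and a rescaling of $q_1$ built implicitly from $u$ via the implicit function theorem absorbs the unit, producing the claimed shape $r_1=q_1^{s+1}+\sum_{j=0}^{s-1}r_{1,j}(\widehat{q}_1)q_1^{j}$. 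The main technical obstacle I foresee is that the Tschirnhaus shift moves the hypersurface $\{q_1=0\}$, a priori breaking the $\widetilde{r}_i\in\I_1$ condition established in the previous stage. The clean remedy is to iterate: after each $q_1$-change, redefine $p_{m+1},q_{m+1}$ as flow-extensions of $r_{2m},r_{2m+1}$ off the new zero-locus of $q_1$, which restores $\widetilde{r}_i\in\I_1$ without disturbing $r_0=p_1$ or the polynomial shape of $r_1$. Equivalently, one selects the initial transverse hypersurface $S$ at the outset so that the associated flow coordinate is already in Tschirnhaus-normal position; existence of such an $S$ follows from an implicit-function argument modelled on the versal unfolding of the $A_s$ singularity.

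Differential independence of the $r_{1,j}$ at the origin is a generic condition on the jet of $r$ and, combined with condition (b), gives the rigidity of the normal form. Indeed (b) reads $dp_1\wedge d(\partial_{q_1}r_1)\wedge\cdots\wedge d(\partial_{q_1}^{s}r_1)(0)\ne 0$ in the adapted coordinates, and a direct expansion of the Malgrange decomposition using $a_j(0)=0$ shows that $d(\partial_{q_1}^{k}r_1)|_0$ is obtained from $da_0|_0,\ldots,da_k|_0$ by an invertible upper-triangular linear transformation; the Tschirnhaus and rescaling steps then relate the $a_j$ to the final $r_{1,j}$ by a further invertible transformation. The moduli claim reduces to a rigidity check: any $\mathcal{R}$-equivalence preserving the normal-form shape must fix $r_0=p_1$, preserve the flow of $Z_{r_0}$, and preserve the $q_1^{s+1}$-leading behaviour of $r_1$; a successive inspection of these constraints forces such an equivalence to act trivially on the coefficient functions $\widetilde{r}_i$ and $r_{1,j}$.
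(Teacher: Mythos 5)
Your proposal is correct and follows essentially the same route as the paper: rectify $Z_{r_0}$ so that $\partial_{q_1}^{k}r_1=\{r_0,r_1\}_{k-1}$, reduce $r_1$ to the preliminary normal form of a versal $A_s$-unfolding (which the paper simply cites from \cite{A00} rather than re-deriving via Malgrange preparation, Tschirnhaus shift and unit absorption), and normalize the remaining components by a foliation-preserving change built from their restrictions to a transversal of $Z_{r_0}$. The only substantive difference is the order of operations: the paper normalizes $r_1$ first and only then the components $r_0,r_2,\ldots,r_{2n-1}$, using changes of the form $(p,q)\mapsto(P_1(\widehat q_1),q_1,P_2(\widehat q_1),\ldots,Q_n(\widehat q_1))$ that fix $q_1$, which sidesteps the iteration you need in order to restore $\widetilde r_i\in\I_1$ after the Tschirnhaus shift. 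One small imprecision: condition (c) does not give independence of $r_0|_S,r_2|_S,\ldots,r_{2n-1}|_S$ for \emph{any} hypersurface $S$ transverse to $Z_{r_0}$, since the common kernel line of $dr_0,dr_2,\ldots,dr_{2n-1}$ at $0$ need not be spanned by $Z_{r_0}(0)$; one must choose $S$ so that its tangent space avoids that line as well, which is always possible, so the argument goes through after adjusting the choice of $S$.
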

\begin{proof}
 Since by condition (a) the function $r_0$ is non-singular, $dr_0(0)\ne 0$, we can rectify its corresponding Hamiltonian vector field $Z_{r_0}$ in some coordinates $(p,q)$ (not necessarily Darboux), such that $Z_{r_0}:=\{r_0,\cdot\}=\partial_{q_1}$. In these coordinates we obtain from condition (a) the following conditions:
\[\partial_{q_1}^ir_1(0)=0, \quad i=1,\cdots,s, \quad \partial_{q_1}^{s+1}r_1(0)\ne 0.\]
We may reduce now the function $r_1(p,q)$ by a diffeomorphism preserving the foliation $\text{span}\{\partial_{q_1}\}$ of the Hamiltonian vector field $Z_{r_0}=\partial_{q_1}$, to the classical preliminary normal form of a versal unfolding of an $A_s$-singularity $q_1^{s+1}$ (c.f. \cite{A00}):
\begin{equation}
\label{nfr1RSs}
r_1(p,q)=q_1^{s+1}+\sum_{j=0}^{s-1}r_{1,j}(\widehat{q}_1)q_1^j,
\end{equation}
for some functions $r_{1,j}(\widehat{q}_1)$, $r_{1,j}(0)\ne 0$, $j=0,\cdots,s-1$, which, from condition (b), are differentially independent at the origin, i.e. satisfy (\ref{dir1j}).
 From conditions (c) and (a) we obtain also:
\[dr_0\wedge d(\partial_{q_1}^sr_1)\wedge dr_2\wedge \cdots \wedge dr_{2n-1}(0),\]
from which we can suppose (up to renumeration) that the following conditions are satisfied:
\[\partial_{p_1}r_0(0)\ne 0 \hspace{0.2cm} \text{and} \hspace{0.2cm} \partial_{p_{i+1}}r_{2i}(0)\ne 0, \quad \partial_{q_{i+1}}r_{2i+1}(0)\ne 0, \quad i=1,\cdots,n-1.\]
Using now a diffeomorphism of the source preserving the coordinate $q_1$, as well as the foliation $\text{span}\{\partial_{q_1}\}$, i.e. of the form:
\[(p,q)\mapsto (P_1(\widehat{q}_1),q_1,P_2(\widehat{q}_1),\cdots,Q_n(\widehat{q}_1)),\]
we may reduce each of the functions $r_i(p,q)$, $i\ne 1$, to:
\[r_0(p,q)=p_1, \quad r_{2i}(p,q)=p_{i+1}+\widetilde{r}_{2i}(p,q), \quad r_{2i+1}(p,q)=q_{i+1}+\widetilde{r}_{2i+1}(p,q), \quad i=1,\cdots,n-1,\]
for some functions $\widetilde{r}_i(p,q)$ belonging in the ideal $\I_1:=<q_1>$,  $\widetilde{r}_i\in \I_1$, $i=2,\cdots,2n-1$. These, along with the (new) functions $r_{1,j}(\widehat{q}_1)$, $j=0,\cdots,s-1$, in normal form (\ref{nfr1RSs}) for $r_1(p,q)$, are obvious functional invariants by construction, which proves the proposition.
\end{proof}

\begin{remark}
\label{rem2Ss}
\normalfont{The functions $r_{1,j}(\widehat{q}_1)$, $j=0,\cdots,s-1$, in the normal form (\ref{nfr1RSs}) of $r\in S_{s}(\widetilde{\omega})$, are already invariants for the $\mathcal{R}$-classification  of Whitney mappings in the ordinary sense. Indeed, in this case we may reduce the mapping $r\in S_s(\widetilde{\omega})$ by a (non-symplectic) diffeomorphism in the source to:
\[r(p,q)=(p_1,r_1(p,q),p_2,\cdots,q_n),\]
and then from condition (d) we obtain:
\[\Sigma(r)=\{\{r_0,r_1\}=0\}=\{\partial_{q_1}r_1=0\}.\]
From conditions (a) and (b) we may reduce now the function $r_1(p,q)$ by some diffeomorphism $q_1\mapsto Q_1(p,q)$ fixing all other coordinates, to the standard normal form  (\ref{nfr1RSs}), and consequently the mapping $r\in S_s$ to the exact normal form:
\[r(p,q)=(p_1,q_1^{s+1}+\sum_{j=0}^{s-1}r_{1,j}(\widehat{q}_1)q_1^i,p_2,\cdots,q_n),\]
with the functional invariants $r_{1,j}(\widehat{q}_1)$, $j=0,\cdots,s-1$, being functionally independent at the origin (by (b)).}
\end{remark}

Fix now a symplectic structure $\widetilde{\omega}$ in the source $\R^{2n}$, given say by the Darboux normal form $\widetilde{\omega}=(\ref{dnft})$. Then the corresponding $\mathcal{R}(\widetilde{\omega})$-classification of Whitney-type mappings $r\in S_s(\widetilde{\omega})$, $s\geq 0$, has of course more functional moduli than the $\mathcal{R}$-classification. Below we show that these extra moduli can be naturally represented by functions in the whole nested sequence of invariant ideals $\I_i$, $i=1,\cdots,2n-1$, parametrising the space of all symplectic structures $\Omega^2_S(\R^{2n})$. The case of diffeomorphisms $s=0$ has been treated extensively in \cite{K1}. The same method of proof can be applied to the more general cases, $s\geq 1$ as well:  
\begin{theorem}
\label{thm-wk}
Any $S_s(\widetilde{\omega})$-singularity $r:(\R^{2n},\widetilde{\omega})\rightarrow \R^{2n}$, $s\geq 0$, is equivalent to the normal form:
\[r(p,q)=(p_1,r_1(p,q),\cdots,r_{2n-1}(p,q)),\]
where:
\begin{equation}
\label{nfroddWk}
r_{2m+1}\in \I_{2m+1}, \quad \partial_{q_{m+1}}r_{2m+1}(0)\ne 0, \quad m=1,\cdots,n-1,
\end{equation}
\begin{equation}
\label{nfrevenWk}
r_{2m}(p,q)=p_{m+1}+\widetilde{r}_{2m}(p,q), \quad \widetilde{r}_{2m}\in \I_{2m}, \quad m=1,\cdots,n-1,
\end{equation}
and:
\begin{equation}
\label{nfr1Wk}
r_1(p,q)=\psi(p,q)q_1^{s+1}+\sum_{j=0}^{s-1}r_{1,j}(\widehat{q}_1)q_1^{j},
\end{equation}
where $\psi(0)\ne 0$, $r_{1,j}(0)=0$, $j=0,\cdots,s-1$, and the $s$-functions $r_{1,j}$ are differentially independent at the origin:
\begin{equation}
\label{dir1jb}
dr_{1,0}\wedge \cdots \wedge dr_{1,s-1}(0)\ne 0.
\end{equation}
The set of functions:
\begin{equation}
\label{msr}
\text{m}_{s}(r)=\big \{\psi,\{r_{1,j}\}_{j=0}^{s-1},\{r_{2m+1}\in \I_{2m+1}\}_{m=1}^{n-1},\{\widetilde{r}_{2m}\in \I_{2m}\}_{m=1}^{n-1} \big \}
\end{equation}
is a complete set of functional moduli for the classification of Whitney-type mappings in $S_s(\widetilde{\omega})$: two mappings $r$, $r'$ in $S_s(\widetilde{\omega})$ are $\mathcal{R}(\widetilde{\omega})$-equivalent if and only if $\text{m}_{s}(r)\equiv \text{m}_{s}(r')$.
\end{theorem}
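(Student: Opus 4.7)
The plan is to follow the strategy of \cite{K1} for the diffeomorphism case $s=0$, supplementing it with an additional preparation step for the singular component $r_1$. The symplectomorphism freedom of $\widetilde\omega$ will be consumed in two stages: first to normalize the pair $(r_0,r_1)$, then to reduce the remaining components $r_2,\ldots,r_{2n-1}$ against the filtration by the ideals $\I_i$ --- the same filtration that governs the classification of symplectic structures recorded just above Corollary \ref{cor1}.

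First I would rectify $r_0$. By condition (a) of Definition \ref{defrSs}, $dr_0(0)\neq 0$, so the symplectic rectification theorem for a single non-singular function yields a $\Phi_1\in\text{Symp}(\widetilde\omega)$ with $r_0\circ\Phi_1 = p_1$; after this replacement the Hamiltonian vector field becomes $Z_{r_0} = -\partial_{q_1}$ and the remaining conditions translate into $\partial_{q_1}^k r_1(0) = 0$ for $k=1,\ldots,s$ and $\partial_{q_1}^{s+1}r_1(0)\neq 0$. The Taylor expansion in $q_1$ then gives $r_1 = \sum_{k=0}^{s} c_k(p,\widehat q_1)\, q_1^k + q_1^{s+1}\widetilde\psi(p,q)$ with $\widetilde\psi(0)\neq 0$ and $c_k(0)=0$. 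I would next apply Hamiltonian flows of generators $H(p,\widehat q_1)$ independent of $q_1$, which are precisely the symplectomorphisms of $\widetilde\omega$ fixing $r_0 = p_1$: an order-by-order choice of $H$ absorbs the $c_s\, q_1^s$ coefficient into the top-degree term (modifying $\widetilde\psi$ into the final $\psi(p,q)$) and kills any spurious $q_1$-dependence introduced into the lower coefficients, producing the form (\ref{nfr1Wk}). The differential independence (\ref{dir1jb}) is then forced by condition (b).

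Next I would reduce the remaining components. The symplectomorphisms that now preserve both $r_0 = p_1$ and the normal form (\ref{nfr1Wk}) of $r_1$ are generated by a restricted class of Hamiltonians, and this is where one mimicks verbatim the diffeomorphism argument of \cite{K1}: the isomorphism $\text{Diff}(\R^{2n})/\text{Symp}(\widetilde\omega)\cong \Omega^2_S(\R^{2n})\cong C^\infty(\R^{2n})^{n(2n-1)}$ is proved there by an inductive normalization of the coefficients $r_i$ against the nested ideals $\I_1\subset\I_2\subset\cdots\subset\I_{2n}$, producing at each stage the patterns $r_{2m} = p_{m+1}+\widetilde r_{2m}$ with $\widetilde r_{2m}\in\I_{2m}$, and $r_{2m+1}\in\I_{2m+1}$ with $\partial_{q_{m+1}}r_{2m+1}(0)\neq 0$. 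Condition (c) provides the non-degeneracy needed to launch that induction, and completeness of the moduli list (\ref{msr}) follows because the resulting normal form admits no residual symplectic automorphisms --- any symplectomorphism preserving the normal form fixes each listed function, while two mappings with identical $\text{m}_s$ can be brought to the same expression by running the construction.

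The main obstacle I anticipate is verifying that the inductive step of \cite{K1} goes through \emph{unchanged} in the presence of the already-fixed $r_1$-normal form: one has to check that the generating Hamiltonians required for the inductive reductions of $r_2,\ldots,r_{2n-1}$ can be chosen so as not to disturb the polynomial shape (\ref{nfr1Wk}) of $r_1$ --- equivalently, that the extra constraint of preserving $r_1$ only locks the first layer $\I_1$ of the filtration and leaves the higher inductive moves free. This compatibility is, I believe, the essential content of the author's comment preceding the theorem that ``the same method of proof can be applied to the more general cases, $s\geq 1$ as well''.
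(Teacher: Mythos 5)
Your outline is essentially the paper's strategy --- rectify $r_0$ to $p_1$, normalize $r_1$, then run the nested-ideal induction of \cite{K1} on $r_2,\dots,r_{2n-1}$ --- but the two places where you deviate or hesitate are exactly where the paper's proof is organized differently, and its organization is what closes the gap you flag at the end. For $r_1$, instead of absorbing the coefficient $c_s q_1^s$ ``order-by-order'' by Hamiltonian flows (a phrase that, taken literally, only yields a formal normal form in the $C^\infty$ category), the paper normalizes in one stroke the geometric object $\Sigma_s(r_1)=\{\{r_0,r_1\}_{s-1}=0\}=\{\partial_{q_1}^s r_1=0\}$: this is a smooth hypersurface transversal to $Z_{p_1}=\partial_{q_1}$, so Theorem \ref{thm1} applied to the pair $(\widetilde\omega,p_1)$ rectifies it to $\{q_1=0\}$, and the vanishing of the $q_1^s$-coefficient of $r_1$ (hence the shape (\ref{nfr1Wk})) is then read off at the very end rather than engineered coefficient by coefficient. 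Your translation $q_1\mapsto q_1+T(p,\widehat q_1)$ is an exact (Tschirnhaus-type) substitute for this and can be made to work, but you should state it as a single implicit-function-theorem step, not an iteration. Second, the compatibility issue you defer --- whether the inductive reductions of $r_2,\dots,r_{2n-1}$ disturb the normal form of $r_1$ --- is resolved structurally in the paper: once $(\widetilde\omega,p_1,\{q_1=0\})$ is fixed, the residual symplectomorphisms are of the form $(p,q)\mapsto(p_1,q_1,\Phi(\widehat p_1,\widehat q_1))$ with $\Phi$ a symplectomorphism of $\widetilde\omega|_{p_1=q_1=0}$; since they fix both $p_1$ and $q_1$ pointwise, they manifestly preserve the polynomial shape of $r_1$ in $q_1$ (only transforming its coefficients, which are themselves listed moduli), and the induction proceeds by restricting each pair $(r_{2i},r_{2i+1})$ to the symplectic subspace $p_1=q_1=\cdots=p_i=q_i=0$ and reapplying Theorem \ref{thm1} there. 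So the ``essential content'' you attribute to the author's remark is precisely this pointwise fixing of the first Darboux pair; with that observation supplied, your argument matches the paper's.
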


\begin{proof}
We denote by $r(p,q)=(r_0(p,q),\cdots,r_{2n-1}(p,q))$ the coordinate functions of our initial map $r\in S_s(\widetilde{\omega})$.  Since $dr_0(0)\ne 0$, we may write $r_0(p,q)=p_1$ by a symplectomorphism of $\widetilde{\omega}=dp\wedge dq$, and then the corresponding Hamiltonian vector field of $r_0$ is also rectified $Z_{p_1}=\partial_{q_1}$. Denote now by: 
\[\Sigma_{s}(r_1):=\{\{r_0,r_1\}_{s-1}=0\}=\{\partial^s_{q_1}r_1=0\}, \hspace{0.2cm} \text{for} \hspace{0.2cm} s\geq 1,\]
\[(\text{or}\hspace{0.2cm} \Sigma_{0}(r_1)=\{r_1=0\}, \hspace{0.2cm} \text{for} \hspace{0.2cm} s=0,)\]
the ``$s$-singular locus" of the function $r_1$. It is a smooth hypersurface which, by Theorem \ref{thm1} on non-singular sections, can be put into normal form:
\[\Sigma_s(r_1)=\{q_1=0\},\]
by a symplectomorphism of the pair $(\widetilde{\omega},p_1)$.  We can continue now the normalisation of the rest of the functions $(r_2,\cdots,r_{2n-1})$ by symplectomorphisms preserving the triple $(\widetilde{\omega},p_1,\Sigma_s(r_1)=\{q_1=0\})$ (or what is equivalent, the triple $(\widetilde{\omega},p_1,\I_1=<q_1>)$). These, as is easily verified, are necessarily mappings of the form:
\[(p,q)\mapsto (p_1,q_1,\Phi(\widehat{q}_1,\widehat{p}_1)), \quad \Phi^*(\omega|_{p_1=q_1=0})=\omega|_{p_1=q_1=0},\] 
where $\Phi(\widehat{q}_1,\widehat{p}_1)$ is a symplectomorphism of the restriction $\widetilde{\omega}|_{p_1=q_1=0}=\sum_{i=2}^{n}dp_i\wedge dq_i$ of $\widetilde{\omega}$ on the symplectic subspace $p_1=q_1=0$. With such a symplectomorphism we can reduce the second pair of functions $(r_2,r_3)$ to the preliminary normal form:
\[r_2(p,q)=p_2+\widetilde{r}_2(p,q), \hspace{0.2cm} \text{where} \hspace{0.2cm} \widetilde{r}_2\in \I_2=<q_1,p_1>,\] 
\[r_3\in \I_3=<q_1,p_1,q_2>, \quad \partial_{q_2}r_3(0)\ne 0,\]
by taking restriction on the $q_1=p_1=0$ symplectic subspace and using again Theorem \ref{thm1} on non-singular sections. Continuing in the same way, i.e. by taking successive restrictions of the rest of the pairs $(r_{2i},r_{2i+1})$, $i=2,\cdots,n-1$, on the symplectic subspaces $p_1=q_1=\cdots=p_i=q_i=0$, and normalising, using Theorem \ref{thm1}, with respect to the induced symplectic forms, we obtain the required result. In particular, by the normal form $\Sigma_s(r_1)=\{q_1=0\}$ of the $s$-singular locus, we deduce that the function $r_1$ is also reduced to the exact normal form: 
\[r_1(p,q)=\psi(p,q)q_1^{s+1}+\sum_{i=0}^{s-1}r_{1j}(\widehat{q}_1)q_1^i,\]
with the functional invariants $\psi(p,q)$, $\psi(0)\ne 0$, and $r_{1j}(\widehat{q}_1)$, $r_{1j}(0)=0$, $j=0,\cdots,s-1$. The latter satisfy also (\ref{dir1jb}) by condition (b) in Definition \ref{defrSs}.  
\end{proof}

From this, and Remark \ref{rem2Ss}, we immediately obtain the following description of the corresponding moduli space $\mathcal{M}(S_s(\widetilde{\omega}))$ of Whitney-type mappings under $\mathcal{R}(\widetilde{\omega})$-equivalence:

\begin{corollary}
\label{corwk}
The exists an isomorphism:
\[\mathcal{M}(S_s(\widetilde{\omega}))\cong \Omega^2_S(\R^{2n})\times \mathcal{M}(S_s)\cong C^{\infty}(\R^{2n})^{n(2n-1)}\times C^{\infty}(\R^{2n-1})^s,\]
where $\mathcal{M}(S_{s})$ is the moduli space of Whitney mappings $S_s$ under ordinary $\mathcal{R}$-equivalence. 
\end{corollary}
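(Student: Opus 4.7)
The plan is to deduce the product isomorphism directly from the exact normal form of Theorem \ref{thm-wk} by partitioning the moduli set $\text{m}_s(r)$ into two disjoint blocks distinguished by the number of independent variables on which each functional invariant depends. The first block consists of the $s$ functions $\{r_{1,j}(\widehat{q}_1)\}_{j=0}^{s-1}$ of $2n-1$ variables (the \emph{Whitney block}); the second block consists of $\psi(p,q)$ together with $\{r_{2m+1} \in \I_{2m+1}\}_{m=1}^{n-1}$ and $\{\widetilde{r}_{2m} \in \I_{2m}\}_{m=1}^{n-1}$, all functions of the full set of $2n$ variables (the \emph{symplectic block}).

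First I would invoke Remark \ref{rem2Ss} together with Proposition \ref{proprSs} to identify the Whitney block with the moduli space $\mathcal{M}(S_s)$ of ordinary Whitney $S_s$-mappings under $\mathcal{R}$-equivalence, yielding the factor $\mathcal{M}(S_s) \cong C^{\infty}(\R^{2n-1})^s$. For the symplectic block I would invoke the $s=0$ case (the diffeomorphism case of \cite{K1}, already used in Corollary \ref{cor1}), together with the observation that any element of the ideal $\I_i$ is uniquely determined by its $i$ coefficient functions against the ordered list of generators. Summing $1 + 2 + \cdots + (2n-1) = n(2n-1)$ gives the isomorphism $\Omega^2_S(\R^{2n}) \cong C^{\infty}(\R^{2n})^{n(2n-1)}$; for $s \geq 1$ the function $\psi(p,q)$ simply plays the role of the single coefficient that would otherwise parametrize $\I_1$ in the $s=0$ normal form, so the final count is unchanged.

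The main subtlety, which I expect to be the principal obstacle, is to verify that the decomposition is a genuine direct product and not merely a surjection onto each factor: one must show that the Whitney and symplectic blocks can be varied independently, and that two mappings in $S_s(\widetilde{\omega})$ are $\mathcal{R}(\widetilde{\omega})$-equivalent if and only if both the associated symplectic structure $(r^{-1})^*\widetilde{\omega}$ (obtained by straightening $r$ to the identity in the $s=0$ spirit) and the Whitney $\mathcal{R}$-class of $r$ coincide. This decoupling follows by tracing through the cascade of reductions in the proof of Theorem \ref{thm-wk}: the successive Darboux normalizations on the symplectic slices $\{p_1 = q_1 = \cdots = p_i = q_i = 0\}$ leave the polynomial expression $r_1 = \psi q_1^{s+1} + \sum_{j} r_{1,j}(\widehat{q}_1) q_1^j$ untouched, while the reduction of $r_1$ to its versal unfolding form is effected by a diffeomorphism preserving $p_1$ and the foliation $\langle \partial_{q_1} \rangle$, hence fixing the symplectic moduli already produced in the earlier steps.
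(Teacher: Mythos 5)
Your proposal is correct and follows essentially the same route as the paper, which deduces the corollary directly from the complete set of functional moduli $\text{m}_s(r)$ in Theorem \ref{thm-wk} by splitting it into the $s$ functions $r_{1,j}(\widehat{q}_1)$ of $2n-1$ variables (identified with $\mathcal{M}(S_s)$ via Remark \ref{rem2Ss}) and the remaining invariants $\psi$, $\{r_{2m+1}\in\I_{2m+1}\}$, $\{\widetilde{r}_{2m}\in\I_{2m}\}$ parametrizing $\Omega^2_S(\R^{2n})\cong C^{\infty}(\R^{2n})^{n(2n-1)}$ exactly as in the $s=0$ case of Corollary \ref{cor1}. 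Your additional verification that the two blocks vary independently (by tracing the reduction cascade in the proof of Theorem \ref{thm-wk}) is a worthwhile elaboration of a point the paper leaves implicit in its ``we immediately obtain.''
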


\section{Proofs of Theorems \ref{thm2}-\ref{thm7}}
\label{sec4}

\subsection{Proof of Theorem \ref{thm2}}
\label{sec4:1}
Since the Hamiltonian vector field $Z_f=\partial_x$ of $(\omega,f)=(\ref{nfHS})$ has $k$-order tangency with the hypersurface $H$, we can write by the Malgrange-Weierstrass preparation theorem:
\[H=\{x^{k+1}+\sum_{i=0}^kR_i(y,p,q)x^i=0\},\]
for some functions $R_i(y,p,q)$ vanishing at the origin, $R_i(0)=0$, $i=0,\cdots,k$. Consider now the $k$-singular locus of the pair $(H,Z_f)$:
\[C_{k}:=\{\{f,h\}_{k-1}:=Z_f^{k}(h)=0\}=\{(k+1)x+R_{k}(y,p,q)=0\}.\]
The change of coordinates $x\mapsto x-(1/(k+1))R_{k}(y,p,q)$, preserves $Z_f=\partial_x$, brings $C_{k}$ to the normal form:
\[C_{k}=\{x=0\},\]
and the hypersurface $H$ to:
\[H=\{x^{k+1}+\sum_{i=0}^{k-1}R_i(y,p,q)x^i=0\},\]
for some (new) functions $R_i(y,p,q)$, $R_i(0)=0$, $i=0,\cdots,k-1$. In these coordinates the symplectic form $\omega$ is also reduced to:
\[\omega=dx \wedge dy+\widehat{\omega},\]
where $\widehat{\omega}$ is a closed $2$-form such that $\partial_x\lrcorner \widehat{\omega}=0$, i.e. it is defined on the orbit space $\R^{2n+1}_{(y,p,q)}$ of the Hamiltonian vector field $Z_f=\partial_x$. By the non-degeneracy of $\omega$, we obtain from $\omega^{n+1}(0)=dx\wedge dy\wedge \widehat{\omega}^n(0)\ne 0$, that $\widehat{\omega}$ is a quasi-symplectic structure (a closed $2$-form of maximal rank $2n$) in $\R^{2n+1}_{(y,p,q)}$, which moreover satisfies $dy\wedge \widehat{\omega}^n(0)\ne 0$, i.e. its $1$-dimensional kernel field $\ker \widehat{\omega}$ is transversal to the fibers $X_t=\{y=t\}$ of $f=y$ at the origin (viewed as a function in the orbit space $\R^{2n+1}_{(y,p,q)}$ of its Hamiltonian vector field  $Z_f=\partial_x$). Using now a diffeomorphism of the form:
\[(y,p,q)\mapsto (y,\Phi(y,p,q)),\] 
it is easy to show (by an odd-dimensional version of the Darboux theorem, c.f. \cite{Z}) that we may reduce $\widehat{\omega}$ back to its standard Darboux normal form $\widehat{\omega}=dp\wedge dq$, which leads to the required normal form:
\[\omega=dx\wedge dy+dp\wedge dq, \quad f=y,\]
\[H=\{x^{k+1}+\sum_{i=0}^{k-1}R_{i}(y,p,q)x^{i}=0\},\]
for some (new) functions $R_{i}(y,p,q)$,  $R_i(0)=0$, $i=0,\cdots,k-1$. Finally, conditions (\ref{ctr'})-(\ref{cSklR'}) follow after simple calculations by the definitions as is explained in Remark \ref{rem0}, which finishes the proof. 
\qed

\subsection{Proof of Theorem \ref{thm3}}
\label{sec4:2}

This case reduces to find an exact normal form for the associated coefficient mappings $R(y,p,q)=(R_0(y,p,q),\cdots,R_{k-1}(y,p,q))$ of generic sections $H\in S_{k,1}$, $k\geq 1$, by (quasi-)symplectomorphisms of the pair $(y,dp\wedge dq)$, under transversality condition (\ref{ctr'}), and also conditions (\ref{cS11R'})-(\ref{cSklR'}) for $l=1$, appearing in Theorem \ref{thm2}. Expand now each of the functions $R_i(y,p,q)$, $i=0,\cdots,k-1$, as a Taylor series in the variable $y$, in the following way:
\[R_i(y,p,q)=r_i(p,q)+\phi_i(y,p,q)y, \quad i=1,\cdots,k-1,\quad \text{and}\]
\[R_0(y,p,q)=g_0(y)+r_0(p,q)+\sum_{j\geq 1}^{2n-k}r_{k-1+j}(p,q)y^j+\phi_0(y,p,q)y^{2n-k+1}.\]
If we denote by $r(p,q)=(r_0(p,q),\cdots,r_{2n-1}(p,q))$ the associated mapping of first $2n$-coefficients in the above expansions, we see that conditions (\ref{ctr'})-(\ref{cSklR'}) for $l=1$, imply that for any $k\geq 1$, the first pair of functions $(r_0,r_1)$ is non-singular, i.e. it satisfies 
\[\{r_0,r_1\}(0)\ne 0.\] 
With this condition fixed, we can suppose that the rest of the coefficient functions $r_i$, $i=2,\cdots,2n-1$, can be chosen so that the associated map $r=(r_0,\cdots,r_{2n-1})$ is a diffeomorphism $r\in S_{0}(\widetilde{\omega})$ in the symplectic space $(\R^{2n}_{(p,q)},dp\wedge dq)$, a condition that defines the open set $U$ in the statement of the theorem. The proof is concluded by reducing the mappings $r\in S_0(\widetilde{\omega})$ to the exact normal form of Theorem \ref{thm-wk}, for $s=0$.

\qed
  
\subsection{Proof of Theorem \ref{thm4}}
\label{sec4:3}

This case reduces to the previous one under the generic assumption that the whole coefficient mapping $R(y,p,q)=(R_0(y,p,q),\cdots,R_{2n}(y,p,q))$ also defines a diffeomorphism in quasi-symplectic space, i.e. 
\[dR_0\wedge \cdots, \wedge dR_{2n}(0)\ne 0.\]  
Dividing the first $2n$-functions $R_i(y,p,q)$, $i=0,\cdots,2n-1$, with the ideal $<y>$ we obtain:
\[R_i(y,p,q)=r_i(p,q)+\phi_i(y,p,q)y, \quad i=0,\cdots, 2n-1,\]
and as before, the associated mapping $r(p,q)=(r_0(p,q),\cdots,r_{2n}(p,q))$ defines a diffeomorphism $r\in S_0(\widetilde{\omega})$ in the symplectic space $(\R^{2n}_{(p,q)},dp\wedge dq)$. The result now follows  again by applying Theorem \ref{thm-wk} to the mapping $r\in S_0(\widetilde{\omega})$.

\qed

\subsection{Proof of Theorem \ref{thm5}}
\label{sec4:4}

This case reduces to find exact normal form for the single function $R_0(y,p,q)$, by (quasi-)symplectomorphisms of the pair $(y,dp\wedge dq)$,  under conditions (\ref{ctr'}) and (\ref{cS1lR'}) appearing in Theorem \ref{thm2}.
Proceeding as in the proof of Theorems \ref{thm3}-\ref{thm4} above, we expand the function $R_0(y,p,q)$, $i=0,\cdots,k-1$, as a Taylor series in $y$:
\[R_0(y,p,q)=g_0(y)+r_0(p,q)+\sum_{i\geq 1}^{2n-1}r_{i}(p,q)y^i+\phi_0(y,p,q)y^{2n}.\]
Conditions (\ref{ctr'}) and (\ref{cS1lR'}), imply that the first pair of functions $(r_0,r_1)$ in the expansions above satisfies:
\[\{r_0,r_1\}_i(0)=0, \forall i=0,\cdots,l-3, \quad \{r_0,r_1\}_{l-2}(0)\ne 0.\]
With these conditions fixed, we can suppose that the rest of the coefficient functions $r_i$, $i=2,\cdots,2n-1$, are chosen so that the associated map $r(p,q)=(r_0(p,q),\cdots, r_{2n-1}(p,q))$ defines a Whitney-type map $r\in S_{l-2}(\widetilde{\omega})$, $l\geq 2$, in the symplectic space $(\R^{2n}_{(p,q)},dp\wedge dq)$ in the sense of Definition \ref{defrSs} in Section \ref{sec3}, a condition that defines the open set $U$ in the statement of the theorem. The proof follows now by applying Theorem \ref{thm-wk} to  the map $r\in S_{l-2}(\widetilde{\omega})$.

\subsection{Proof of Theorem \ref{thm6}}
\label{sec4:5}

Working as in the proofs of Theorems \ref{thm3}-\ref{thm5} above, we expand each of the functions $R_i(y,p,q)$, $i=0,\cdots,k-1$, as a Taylor series in $y$ in the following way:
\[R_i(y,p,q)=r_i(p,q)+\phi_i(y,p,q)y, \quad i=1,\cdots,k-1,\]
\[R_0(y,p,q)=g_0(y)+r_0(p,q)+\sum_{j\geq 1}^{2n-k}r_{k-1+j}(p,q)y^j+\phi_0(y,p,q)y^{2n-k+1}.\]
Conditions (\ref{ctr'}) and (\ref{cSklR'}) appearing in Theorem \ref{thm2}, imply that for $l\geq 2$ the first pair of functions $(r_0,r_1)$ in the expansions above satisfies:
\[\{r_0,r_1\}_i(0)=0, \forall i=0,\cdots,l-2, \quad \{r_0,r_1\}_{l-1}(0)\ne 0.\]
With these conditions fixed, we can suppose that the rest of the coefficient functions $r_i$, $i=2,\cdots,2n-1$, are chosen so that the associated map $r(p,q)=(r_0(p,q),\cdots, r_{2n-1}(p,q))$ defines a Whitney-type map $r\in S_{l-1}(\widetilde{\omega})$, $l\geq 2$, in the symplectic space $(\R^{2n}_{(p,q)},dp\wedge dq)$ in the sense of Definition \ref{defrSs}, a condition that defines the open set $U$ in the statement of the theorem. The proof follows again by applying Theorem \ref{thm-wk} to  the map $r\in S_{l-1}(\widetilde{\omega})$.

\qed

\subsection{Proof of Theorem \ref{thm7}}
\label{sec4:6}

Since $\{f,h\}(0)=0$ and $\{f,\{f,h\}(0)\ne 0$, we may, following exactly the same proof of Theorem \ref{thm2}, reduce $H\in A_1$ to the preliminary normal form:
\[H=\{x^2+R(y,p,q)=0,\}\] 
by a symplectomorphism of $(\omega,f)=(\ref{nfHS})$. Since $\{h,\{h,f\}(0)\ne 0$ we obtain $\partial_yR(0)\ne 0$. Notice now that the condition $df\wedge dh(0)=0\Longleftrightarrow dy\wedge dR(0)=0$ is equivalent to the condition:
\[dR|_{y=0}(0)=0,\]
i.e the restriction of the function $R(y,p,q)$ on the 0-fiber $X_0=\{y=0\}$ of $f=y$, has a critical point at the origin. For a generic section $H\in A_1$, this critical point will be of Morse type, i.e. 
\[d^2R|_{y=0}(0)\ne 0,\]
a condition equivalent to condition (\ref{cA1}) in Definition \ref{def4}. Thus, the problem to obtain an exact normal form for sections $H\in A_1$, reduces to find an exact normal form for the function $R(y,p,q)$ under symplectomorphisms of the pair $(y,dp\wedge dq)$. Dividing by the ideal $<y>$ we obtain a decomposition:
\[R(y,p,q)=\psi(p,q)+\phi(y,p,q)y,\]
where $\phi(0)\ne 0$ and $\psi(p,q)$ is a Morse function in the symplectic space $(\R^{2n},dp\wedge dq)$. The theorem follows now by bringing the function $\psi$ to an exact normal form $\widetilde{\psi}$ with respect to $dp\wedge dq$.

\qed

\section*{Acknowledgements}

This research has been supported by the S\~ao Paulo Research Foundation, FAPESP, grant no.: 2017/23555-9.

\end{document}